\newtheorem{lem}{\noindent {\bf Lemma}}[section]
\newtheorem{prop}{\noindent {\bf Proposition}}[section]
\newtheorem{coro}{\noindent {\bf Corollary}}[section]
\newtheorem{thm}{\noindent {\bf Theorem}}[section]
\newcounter{remark}
\newenvironment{remark}{\smallskip\noindent {\bf Remark \arabic{section}.\arabic{remark}.}}
{\addtocounter{remark}{1}\par}
\newcounter{example}
\date{}
\newcounter{defi}\setcounter{defi}{1}
\newenvironment{defi}{
\smallskip \noindent
{\bf
  Definition \arabic{section}.\arabic{defi}.
}}{\addtocounter{defi}{1}\par}
\newcommand{\ZZ}{\mathbb Z}
\newcommand{\NN}{\mathbb N}
\newcommand{\U}{\mathcal U}
\title{\bf On metric spaces with given transfinite asymptotic dimensions}
\author{\large  Yan Wu$^\ast$\qquad Jingming Zhu$^{\ast\ast}$\qquad Taras Radul $^{\ast\ast\ast}$
\footnote{
$^\ast$ College of Data Science, Jiaxing University, Jiaxing , 314001, P.R.China.\newline
E-mail: yanwu@mail.zjxu.edu.cn
\newline
$^\ast\ast$ College of Data Science, Jiaxing University, Jiaxing , 314001, P.R.China.\newline
 E-mail: jingmingzhu@mail.zjxu.edu.cn
 \newline
 $^{\ast\ast\ast}$  Institute of Mathematics, Casimirus the Great University of Bydgoszcz, Poland;
\newline
Department of Mechanics and Mathematics, Ivan Franko National University of Lviv,
Universytetska st., 1.79000 Lviv, Ukraine.
\newline
 E-mail: tarasradul@yahoo.co.uk}}
\date{}
\begin{document}
\maketitle
\begin{center}
\begin{minipage}{0.9\textwidth}
\noindent{\bf Abstract.}
For every countable ordinal number $\xi$, we construct a metric space $X_{\xi}$ whose transfinite asymptotic dimension and complementary-finite asymptotic dimension are both $\xi$.

{\bf Keywords } Asymptotic dimension, Transfinite asymptotic dimension, Complementary-finite asymptotic dimension;

\end{minipage}
\end{center}
\footnote{
This research was supported by
the National Natural Science Foundation of China under Grant (No.12071183,11871342)

}

\begin{section}{Introduction}

In coarse geometry, asymptotic dimension of a metric space is an important concept which was defined
by Gromov for studying asymptotic invariants of discrete groups \cite{Gromov}. This dimension can be considered
as an asymptotic analogue of the Lebesgue covering dimension. As a large scale analogue of W.E. Haver's
property $C$ in dimension theory, A. Dranishnikov introduced the notion of asymptotic property $C$ in \cite{Dranishnikov}. It is
well known that every metric space with finite asymptotic dimension has asymptotic property $C$. But the
inverse is not true, which means that there exists some metric space $X$ with infinite asymptotic dimension
and asymptotic property $C$. Therefore how to classify the metric spaces with infinite asymptotic dimension
into smaller categories becomes an interesting problem.

In \cite{Radul2010}, T. Radul defined the transfinite asymptotic dimension (trasdim) which can be viewed as a transfinite extension of the asymptotic dimension and proved that for a metric space $X$, trasdim$(X)<\infty$ if and only if $X$ has asymptotic property $C$.  He also gave examples of metric spaces with trasdim$=\infty$ and with trasdim$=\omega$, where $\omega$ is the smallest infinite ordinal number (see \cite{Radul2010}). But whether there is a metric space $X$ with $\omega<$trasdim$(X)<\infty$ (stated as``omega conjecture"in \cite{satkiewicz} by M. Satkiewicz) was unknown until recently. A metric space $X$ with trasdim$X=\omega+1$ was constructed in \cite{omega+1}. Some examples of metric spaces with higher trasdim were constructed in \cite{omega+k}. Let us remark that trasdim takes only countable values \cite{Radul2010}.
In this paper, for every countable ordinal number $\xi$, we construct a metric space $X_{\xi}$ with trasdim$(X_{\xi})=\xi$ and coasdim$(X_{\xi})=\xi$, which generalized the results in \cite{omega+1} and \cite{omega+k}. Let us remark that this result combined with a recent result in \cite{Orz2020} gives an answer to a problem stated by \cite{Dydak2020}.


The paper is organized as follows: In Section 2, we recall some definitions and properties of transfinite asymptotic dimension and complementary-finite asymptotic dimension. In
Section 3, we introduce a concrete metric space $X_{\xi}$ with trasdim$(X_{\xi})=\xi$ and coasdim$(X_{\xi})=\xi$ for every countable ordinal number $\xi$.
\end{section}

\begin{section}{Preliminaries}\

Our terminology concerning the asymptotic dimension follows from \cite{Bell2011} and for undefined terminology we refer to \cite{Radul2010} and \cite{omega+k}.

Let~$(X, d)$ be a metric space and $U,V\subseteq X$, let
\[
\text{diam}~ U=\text{sup}\{d(x,y)~|~ x,y\in U\}
\text{   and   }
d(U,V)=\text{inf}\{d(x,y)~|~ x\in U,y\in V\}.
\]
Let $R>0$ and $\mathcal{U}$ be a family of subsets of $X$. $\mathcal{U}$ is said to be \emph{$R$-bounded} if
\[
\text{diam}~\mathcal{U}\doteq\text{sup}\{\text{diam}~ U~|~ U\in \mathcal{U}\}\leq R.
\]
In this case, $\mathcal{U}$ is said to be \emph{uniformly bounded}.
Let $r>0$, a family $\mathcal{U}$ is said to be\emph{ $r$-disjoint} if
\[
d(U,V)\geq r~\text{for every}~ U,V\in \mathcal{U}\text{~with~}U\neq V.
\]

In this paper, we denote
$\bigcup\{U~|~U\in\mathcal{U}\}$ by $\bigcup\mathcal{U}$, denote $\{U~|~U\in\mathcal{U}_{1}\text{~or~}~U\in\mathcal{U}_{2}\}$
by $\mathcal{U}_{1}\cup\mathcal{U}_{2}$.
Let $A$ be a subset of a metric space $X$ and $\epsilon>0$. We denote $\{x\in X~|~d(x,A)<\epsilon\}$ by $N_{\epsilon}(A)$
and denote $\{x\in X~|~d(x,A)\leq\epsilon\}$ by $\overline{N_{\epsilon}(A)}$.
We denote $\{N_{\epsilon}(U)~|~U\in\mathcal{U}\}$ by $N_{\epsilon}(\mathcal{U})$ and
denote $\{\overline{N_{\epsilon}(U)}~|~U\in\mathcal{U}\}$ by $\overline{N_{\epsilon}(\mathcal{U})}$.

\begin{defi}(\cite{Gromov})
A metric space $X$ is said to have \emph{finite asymptotic dimension} if
there exists $n\in\NN$, such that for every $r>0$,
there exists a sequence of uniformly bounded families
$\{\mathcal{U}_{i}\}_{i=0}^{n}$ of subsets of $X$
such that the family
$\bigcup_{i=0}^{n}\mathcal{U}_{i}$ covers $X$ and each $\mathcal{U}_{i}$
is $r$-disjoint for $i=0,1,\cdots,n$. In this case, we say that
the \emph{asymptotic dimension} of $X$ less than or equal to $n$, which is denoted by
 asdim$(X)\leq n$.

We say that asdim$(X)= n$ if  asdim$(X)\leq n$ and asdim$(X)\leq n-1$ is not true.

\end{defi}

T. Radul generalized asymptotic dimension of a metric space $X$ to transfinite asymptotic dimension which is denoted by trasdim$(X)$ (see \cite{Radul2010}). We will need the set-theoretical function Ord from \cite{Borst1988} which classifies families of finite non-empty subsets of a set $T$. We will consider only the particular case when $T=\NN$.

\begin{defi}\rm(\cite{Borst1988})
Let $Fin\mathbb{N}$ denote the collection of all finite, nonempty
subsets of $\mathbb{N}$ and let $ M \subseteq Fin\mathbb{N}$. For $\sigma\in \{\varnothing\}\cup Fin\mathbb{N}$, let
$$M^{\sigma} = \{\tau\in Fin\mathbb{N} ~|~ \tau \cup \sigma \in M \text{ and } \tau \cap \sigma = \varnothing\}.$$
Let $M^a$ abbreviate $M^{\{a\}}$ for $a \in \NN$. Define\emph{ the ordinal number} Ord$M$ inductively as follows:
\begin{eqnarray*}
\text{Ord}M = 0 &\Leftrightarrow& M = \varnothing,\\
\text{Ord}M \leq \alpha &\Leftrightarrow& \forall~ a\in \mathbb{N}, ~\text{Ord}M^a < \alpha,\\
\text{Ord}M = \alpha &\Leftrightarrow& \text{Ord}M \leq \alpha \text{ and } \text{Ord}M < \alpha \text{ is not true},\\
\text{Ord}M = \infty &\Leftrightarrow& \text{Ord}M \leq\alpha \text{ is not true for every ordinal number } \alpha.
\end{eqnarray*}

\end{defi}
\rm We call a family $M \subseteq Fin \NN$ \emph{inclusive} if and only if for each $\sigma, \tau\in Fin \mathbb{N}$ such that $\tau\subseteq\sigma$ and
\rm $\sigma\in M$, we have $\tau\in M$. We consider only inclusive families in the following.

\rm The following lemmas are particular cases of the corresponding lemmas from \cite{Borst1988}.

\begin{lem}\rm(\cite{Borst1988})
\label{lemord0}
Let $M \subseteq Fin \NN$ and $n\in\NN$.
Then\[ \text{Ord$ M\leq n$ if and only if $|\sigma|\leq n$ for every $\sigma \in M$.}\]
\end{lem}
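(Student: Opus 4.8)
The plan is to argue by induction on $n$, unwinding the inductive definition of $\mathrm{Ord}$ one level at a time and reducing each step to a purely combinatorial statement about cardinalities. Throughout I use that $M^{a}=M^{\{a\}}=\{\tau\in Fin\NN \mid \tau\cup\{a\}\in M,\ \tau\cap\{a\}=\varnothing\}$, and that every element of $Fin\NN$ is nonempty.

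For the base case $n=0$, observe that $\mathrm{Ord}\,M\leq 0$ holds precisely when $\mathrm{Ord}\,M=0$, i.e.\ when $M=\varnothing$, by the first clause of the definition. On the other side, since every $\sigma\in M\subseteq Fin\NN$ satisfies $|\sigma|\geq 1$, the condition ``$|\sigma|\leq 0$ for every $\sigma\in M$'' can hold only if $M$ has no members at all, again forcing $M=\varnothing$. Thus both sides are equivalent to $M=\varnothing$.

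For the inductive step I assume the statement for $n$ and prove it for $n+1$. By definition $\mathrm{Ord}\,M\leq n+1$ is equivalent to $\mathrm{Ord}\,M^{a}<n+1$, i.e.\ $\mathrm{Ord}\,M^{a}\leq n$, for every $a\in\NN$. Applying the inductive hypothesis to each family $M^{a}$, this becomes the assertion that $|\tau|\leq n$ for every $a\in\NN$ and every $\tau\in M^{a}$. It therefore suffices to prove the equivalence of this with ``$|\sigma|\leq n+1$ for every $\sigma\in M$''. The direction ($\Leftarrow$) is immediate: if $\tau\in M^{a}$ then $\sigma:=\tau\cup\{a\}\in M$ with $a\notin\tau$, so $|\tau|=|\sigma|-1\leq n$. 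For ($\Rightarrow$), given $\sigma\in M$ pick any $a\in\sigma$ and set $\tau=\sigma\setminus\{a\}$; when $\tau\neq\varnothing$ we have $\tau\in M^{a}$, whence $|\sigma|=|\tau|+1\leq n+1$.

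The only delicate point — and the one I expect to be the main obstacle — is the edge case in the ($\Rightarrow$) direction where $\sigma$ is a singleton: then $\tau=\sigma\setminus\{a\}=\varnothing\notin Fin\NN$, so $\sigma$ is not ``witnessed'' by any element of $M^{a}$ and the hypothesis on $M^{a}$ says nothing about it. This case must be handled separately, but it is harmless because $|\sigma|=1\leq n+1$ holds outright for every $n\geq 0$. With this case dispatched, the combinatorial equivalence is complete, the inductive step closes, and the lemma follows for all $n\in\NN$.
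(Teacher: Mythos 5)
Your proof is correct. Note that the paper itself gives no argument for this lemma: it is quoted verbatim from Borst's paper as a known special case, so there is nothing internal to compare against. Your induction on $n$ is the standard proof, and you handle the two genuinely delicate points properly: the singleton case in the forward direction of the inductive step (where $\sigma\setminus\{a\}=\varnothing\notin Fin\NN$ and so $\sigma$ is invisible to every $M^{a}$), and the base case, where you correctly read ``$\mathrm{Ord}\,M\leq 0$'' as ``$\mathrm{Ord}\,M=0$'', i.e.\ $M=\varnothing$ --- the literal second clause of the definition, applied at $\alpha=0$, would be vacuously unsatisfiable even for $M=\varnothing$, so this interpretive choice is forced and matches Borst's intent. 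The only step you leave implicit is the equivalence of $\mathrm{Ord}\,M^{a}<n+1$ with $\mathrm{Ord}\,M^{a}\leq n$, which needs the (easy, inductively verified) monotonicity of the relation $\mathrm{Ord}\,M\leq\alpha$ in $\alpha$ and the fact that $\mathrm{Ord}\,M^{a}=\infty$ falsifies both sides; this is routine and does not constitute a gap.
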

\begin{lem}\rm(\cite{Borst1988})
\label{lemordsubset}
If $ M,N\subseteq Fin\mathbb{N}$ and $M\subseteq N$, then $\text{Ord}M\leq\text{Ord}N.$
\end{lem}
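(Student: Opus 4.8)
The plan is to prove the statement by transfinite induction, exploiting the recursive clause $\text{Ord}M \leq \alpha \Leftrightarrow \forall a\in\mathbb{N},\ \text{Ord}M^a < \alpha$ from the definition of Ord. First I would dispose of the trivial case: if $\text{Ord}N = \infty$, then $\text{Ord}M \leq \text{Ord}N$ holds automatically, so we may assume that $\text{Ord}N$ is an ordinal number.

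The key preliminary observation is that the inclusion $M \subseteq N$ is inherited by the derived families, namely $M^a \subseteq N^a$ for every $a \in \mathbb{N}$. Indeed, if $\tau \in M^a$ then $\tau \cup \{a\} \in M$ and $\tau \cap \{a\} = \varnothing$; since $M \subseteq N$, also $\tau \cup \{a\} \in N$ while $\tau \cap \{a\} = \varnothing$ still holds, whence $\tau \in N^a$. I would state and verify this first, since it is the hinge on which the induction turns.

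Next I would prove, by transfinite induction on the ordinal $\alpha$, the following strengthened claim: for all $M \subseteq N$, if $\text{Ord}N \leq \alpha$ then $\text{Ord}M \leq \alpha$. The base case $\alpha = 0$ is handled directly, because $\text{Ord}N = 0$ forces $N = \varnothing$, hence $M = \varnothing$ and $\text{Ord}M = 0$. For the inductive step, fix $a \in \mathbb{N}$; since $\text{Ord}N \leq \alpha$, the definition gives $\text{Ord}N^a < \alpha$. Because $M^a \subseteq N^a$, the induction hypothesis applied at the strictly smaller ordinal $\text{Ord}N^a$ yields $\text{Ord}M^a \leq \text{Ord}N^a < \alpha$. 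As $a$ was arbitrary, $\text{Ord}M \leq \alpha$ by the definition of Ord. Finally, taking $\alpha = \text{Ord}N$ gives $\text{Ord}M \leq \text{Ord}N$, as required.

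I do not expect a serious obstacle, as the content is essentially bookkeeping around the inductive definition. The only point demanding care is ensuring the induction hypothesis is invoked at a genuinely smaller ordinal: this is precisely what the passage from $M, N$ to the derived families $M^a, N^a$ secures, since the clause $\text{Ord}N \leq \alpha \Leftrightarrow \forall a,\ \text{Ord}N^a < \alpha$ guarantees that $\text{Ord}N^a < \alpha$.
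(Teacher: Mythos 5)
Your proof is correct: the observation $M^a\subseteq N^a$ is exactly the right hinge, and the transfinite induction on $\alpha$ (with the base case read off from the clause $\text{Ord}N=0\Leftrightarrow N=\varnothing$ and the $\infty$ case dismissed up front) goes through without gaps. The paper itself gives no proof, citing Borst, but this is precisely the standard argument behind that citation, so there is nothing to add.
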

\begin{lem}\rm(\cite{Borst1988})
\label{lemord2}
If $ M,N\subseteq Fin\mathbb{N}$, then $\text{Ord}(M\cup N)\leq\text{max}\{\text{Ord}M,\text{Ord}N\}.$
\end{lem}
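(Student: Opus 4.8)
The plan is to reduce the statement to a transfinite induction on the ordinal $\alpha = \max\{\text{Ord}M, \text{Ord}N\}$, the engine of which is the observation that the operation $(\cdot)^a$ distributes over unions. First I would record the key identity: for every $a \in \mathbb{N}$,
\[
(M \cup N)^a = M^a \cup N^a.
\]
This follows immediately by unwinding the definition of $M^{\sigma}$ with $\sigma = \{a\}$: since the side condition $\tau \cap \{a\} = \varnothing$ (that is, $a \notin \tau$) is common to all three families, a set $\tau$ with $a \notin \tau$ satisfies $\tau \cup \{a\} \in M \cup N$ precisely when $\tau \cup \{a\} \in M$ or $\tau \cup \{a\} \in N$.

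If either $\text{Ord}M$ or $\text{Ord}N$ equals $\infty$, the claimed inequality is vacuous, so I may assume $\alpha = \max\{\text{Ord}M, \text{Ord}N\}$ is an ordinal. I would then prove, by transfinite induction on $\alpha$, the uniform statement: for all $M,N \subseteq Fin\mathbb{N}$ with $\max\{\text{Ord}M, \text{Ord}N\} \leq \alpha$ one has $\text{Ord}(M \cup N) \leq \alpha$. The base case $\alpha = 0$ is immediate, since $\max = 0$ forces $M = N = \varnothing$ by the first clause of the definition, whence $M \cup N = \varnothing$ and $\text{Ord}(M\cup N)=0$.

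For the inductive step, I would assume the statement for all $\beta < \alpha$ and let $M,N$ satisfy $\max\{\text{Ord}M, \text{Ord}N\} \leq \alpha$. By the defining clause $\text{Ord}(\cdot) \leq \alpha \Leftrightarrow \forall a,\ \text{Ord}(\cdot)^a < \alpha$, for every $a \in \mathbb{N}$ we have $\text{Ord}M^a < \alpha$ and $\text{Ord}N^a < \alpha$, so $\beta := \max\{\text{Ord}M^a, \text{Ord}N^a\} < \alpha$. Applying the induction hypothesis to the pair $(M^a, N^a)$ at the ordinal $\beta$, together with the identity above, yields
\[
\text{Ord}(M \cup N)^a = \text{Ord}(M^a \cup N^a) \leq \beta < \alpha .
\]
As this holds for every $a$, the definition gives $\text{Ord}(M \cup N) \leq \alpha$, completing the induction and hence the lemma.

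I do not expect a genuine obstacle here; the only point requiring care is to frame the induction on the ordinal bound $\alpha$ while quantifying over all pairs $(M,N)$ simultaneously, rather than attempting to induct on a single fixed family. This is exactly what makes the induction hypothesis available for the derived families $M^a, N^a$, whose $\text{Ord}$ values have strictly dropped below $\alpha$, so that the distributive identity can close the step.
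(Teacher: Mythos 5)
Your proof is correct. The paper does not prove this lemma at all --- it is quoted as a special case of a result from Borst's paper \cite{Borst1988} --- and your argument (the identity $(M\cup N)^a=M^a\cup N^a$ plus transfinite induction on the bound $\alpha$, quantifying over all pairs $(M,N)$ at once) is exactly the standard proof given there, with the base case and the $\infty$ case handled properly.
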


\begin{lem}\rm(\cite{Borst1988})
\label{lemord1}
Let $L$ and $L'$ be sets. Let $Fin L$ and $Fin L'$ denote  collections of all finite, nonempty
subsets of $L$ and $L'$, respectively.
Let $M \subseteq Fin L$, $M'\subseteq Fin L'$ and
$\phi: L\to L'$ be a function  such that for every $\sigma \in M$, we have $\phi(\sigma)\in M'$ and $|\phi(\sigma)|=|\sigma|$. Then Ord$ M\leq$Ord $M'$.
\end{lem}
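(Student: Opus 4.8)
The plan is to prove the lemma by transfinite induction on an ordinal $\alpha$, establishing the implication ``$\text{Ord}\,M'\le\alpha\ \Rightarrow\ \text{Ord}\,M\le\alpha$'' simultaneously for \emph{all} triples consisting of $M\subseteq Fin L$, $M'\subseteq Fin L'$, and a map $\phi\colon L\to L'$ satisfying the hypothesis (for every $\sigma\in M$ we have $\phi(\sigma)\in M'$ and $|\phi(\sigma)|=|\sigma|$). Once this is proved, applying it with $\alpha=\text{Ord}\,M'$ gives $\text{Ord}\,M\le\text{Ord}\,M'$ when $\text{Ord}\,M'$ is an ordinal, while the case $\text{Ord}\,M'=\infty$ is trivial. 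It is essential to keep $M$ and $M'$ universally quantified in the inductive statement, since the inductive step will compare not $M$ and $M'$ but their derivatives $M^a$ and $(M')^{\phi(a)}$, to which the same $\phi$ is applied.

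For the base case $\alpha=0$, the assumption $\text{Ord}\,M'\le 0$ forces $M'=\varnothing$; since every $\sigma\in M$ would satisfy $\phi(\sigma)\in M'$, this forces $M=\varnothing$ and hence $\text{Ord}\,M=0$. For the inductive step, I assume the claim for all $\beta<\alpha$ and suppose $\text{Ord}\,M'\le\alpha$. By the inductive definition of $\text{Ord}$ it suffices to show $\text{Ord}\,M^a<\alpha$ for each $a\in L$. Fixing $a$ and writing $b=\phi(a)$, the core of the argument is to verify that the triple $(M^a,(M')^b,\phi)$ again satisfies the hypothesis of the lemma; granting this, the inductive hypothesis yields $\text{Ord}\,M^a\le\text{Ord}\,(M')^b$, and since $\text{Ord}\,M'\le\alpha$ gives $\text{Ord}\,(M')^b<\alpha$, we get $\text{Ord}\,M^a<\alpha$ as required. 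As $a$ is arbitrary, this closes the induction.

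To check the hypothesis is inherited, take any $\tau\in M^a$, so that $\tau\cup\{a\}\in M$ and $a\notin\tau$; then $\phi(\tau\cup\{a\})=\phi(\tau)\cup\{b\}\in M'$, and the cardinality condition gives $|\phi(\tau)\cup\{b\}|=|\tau\cup\{a\}|=|\tau|+1$. This single equation is the crux: because $|\phi(\tau)|\le|\tau|$ always, the value $|\tau|+1$ can be attained only if simultaneously $|\phi(\tau)|=|\tau|$ and $b\notin\phi(\tau)$. The second fact gives $\phi(\tau)\cap\{b\}=\varnothing$, so together with $\phi(\tau)\cup\{b\}\in M'$ it shows $\phi(\tau)\in(M')^b$, while the first fact supplies the needed $|\phi(\tau)|=|\tau|$. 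I expect this cardinality bookkeeping—extracting both the disjointness $b\notin\phi(\tau)$ and the equality $|\phi(\tau)|=|\tau|$ from the one identity above—to be the only genuinely delicate point, and it is precisely where the injectivity encoded in $|\phi(\sigma)|=|\sigma|$ is used; the remainder is a routine unwinding of the definition of $\text{Ord}$.
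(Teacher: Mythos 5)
Your proof is correct. The paper itself gives no argument for this lemma---it is quoted as a special case of a result from Borst's paper \cite{Borst1988}---and your transfinite induction on $\mathrm{Ord}\,M'$, with the key observation that $|\phi(\tau)\cup\{b\}|=|\tau|+1$ forces both $b\notin\phi(\tau)$ and $|\phi(\tau)|=|\tau|$ so that the hypothesis passes to the triple $(M^a,(M')^{\phi(a)},\phi)$, is precisely the standard (Borst's) argument and is complete.
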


Let $M\subseteq Fin\mathbb{N}$ and $K$ be an infinite subset of $\mathbb{N}$. Then there is a standard bijection $\varphi$ from $\mathbb{N}$ to $K$ which keeps the order. We define $M[K]=\Big\{\{\varphi(k_1),\varphi(k_2),\cdots,\varphi(k_m)\}~\Big|~\{k_1,k_2,\cdots,k_m\}\in M\Big\}$.
Note that $M=M[\mathbb{N}]$.
By Lemma \ref{lemord1}, we obtain the following result.
\begin{coro}\rm
\label{coroord}
 Let $M\subseteq Fin\mathbb{N}$ and $K$ is a infinite subset of $\mathbb{N}$, then Ord$M=$Ord$M[K]$.
\end{coro}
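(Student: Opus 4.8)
The plan is to obtain the equality by establishing the two inequalities $\text{Ord}M \leq \text{Ord}M[K]$ and $\text{Ord}M[K] \leq \text{Ord}M$ separately, each through a single application of Lemma \ref{lemord1} taken with $L = L' = \NN$. This choice of common ground set is convenient because it lets me avoid any discussion of how $\text{Ord}$ of $M[K]$ depends on whether it is computed over $K$ or over $\NN$; everything stays inside $Fin\NN$.

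For the inequality $\text{Ord}M \leq \text{Ord}M[K]$, I would take $\phi$ to be the standard order-preserving bijection $\varphi\colon \NN \to K$, regarded as a self-map of $\NN$. By the very definition of $M[K]$, every $\sigma = \{k_1, \ldots, k_m\} \in M$ satisfies $\varphi(\sigma) = \{\varphi(k_1), \ldots, \varphi(k_m)\} \in M[K]$, and since $\varphi$ is injective we have $|\varphi(\sigma)| = |\sigma|$. Thus the hypotheses of Lemma \ref{lemord1} hold with $M' = M[K]$, and the lemma immediately yields $\text{Ord}M \leq \text{Ord}M[K]$.

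For the reverse inequality I would use the inverse direction, but since $\varphi^{-1}$ is defined only on $K$ I first extend it to a self-map $\psi\colon \NN \to \NN$ by setting $\psi(n) = \varphi^{-1}(n)$ for $n \in K$ and assigning $\psi$ an arbitrary fixed value on $\NN \setminus K$. Then for every $\tau \in M[K]$, writing $\tau = \{\varphi(k_1), \ldots, \varphi(k_m)\}$ with $\{k_1, \ldots, k_m\} \in M$, I get $\psi(\tau) = \{k_1, \ldots, k_m\} \in M$; moreover $\psi$ restricts to the injection $\varphi^{-1}$ on $K \supseteq \tau$, so $|\psi(\tau)| = |\tau|$. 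Applying Lemma \ref{lemord1} with $M$ now playing the role of $M'$ gives $\text{Ord}M[K] \leq \text{Ord}M$, and combining the two inequalities proves the corollary.

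The argument is essentially a symmetric double application of Lemma \ref{lemord1}, so I do not anticipate any serious obstacle; the only point requiring attention is in the reverse inequality, where one must extend $\varphi^{-1}$ to a genuine function on all of $\NN$ and then verify that it still carries each member of $M[K]$ bijectively back onto the corresponding member of $M$, which is exactly what the injectivity of $\varphi^{-1}$ on $K$ guarantees.
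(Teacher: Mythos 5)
Your proposal is correct and is essentially the paper's intended argument: the paper proves the corollary simply by invoking Lemma \ref{lemord1}, and your two applications of that lemma (with $\varphi$ and with an arbitrary extension of $\varphi^{-1}$ to all of $\NN$) supply exactly the details left implicit there. The extension step is harmless since every member of $M[K]$ lies inside $K$, where $\psi$ agrees with the injective map $\varphi^{-1}$.
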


\begin{defi}\rm(\cite{Radul2010})
Given a metric space $X$, define the following collection:
\[
\begin{split}
A(X,d) = \{\sigma \in Fin\mathbb{N}~ |~&\text{ there are no uniformly bounded families } \mathcal{U}_i  \text{ for } i \in \sigma
 \\& \text{ such that each } \mathcal{U}_i
\text{ is } i\text{-disjoint and }\bigcup_{i\in\sigma}\mathcal{U}_i \text{~covers~} X\}
\end{split}\]
and
\[
\begin{split}
A_{2}(X,d) = \{\sigma\in Fin \mathbb{N}~  | ~&\text{ there are no uniformly bounded families } \mathcal{V}_i \text{ for }i \in \sigma \\
&\text{ such that each }\mathcal{V}_i\text{ is } 2^{i}\text{-disjoint and }\bigcup_{i\in\sigma}\mathcal{V}_i \text{ covers } X  \}.
\end{split}
\]
The \emph{transfinite asymptotic dimension} of $X$ is defined as trasdim$(X) =$ Ord$A(X,d)$ .
\end{defi}

\begin{lem}\rm
\label{lemordequal}

Ord$A(X,d)=$Ord$A_{2}(X,d)$.

\end{lem}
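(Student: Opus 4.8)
The plan is to establish the equality by proving the two inequalities $\text{Ord}A(X,d)\le\text{Ord}A_2(X,d)$ and $\text{Ord}A_2(X,d)\le\text{Ord}A(X,d)$ separately, exploiting throughout the elementary fact that $2^i\ge i$ for every $i\in\mathbb{N}$.

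For the first inequality I would show the inclusion $A(X,d)\subseteq A_2(X,d)$ and then invoke Lemma \ref{lemordsubset}. To see the inclusion, fix $\sigma\in A(X,d)$ and suppose toward a contradiction that $\sigma\notin A_2(X,d)$, so that there exist uniformly bounded families $\mathcal{V}_i$, $i\in\sigma$, with each $\mathcal{V}_i$ being $2^i$-disjoint and $\bigcup_{i\in\sigma}\mathcal{V}_i$ covering $X$. Since $2^i\ge i$, any $2^i$-disjoint family is in particular $i$-disjoint, so the very same families $\mathcal{V}_i$ witness that $\sigma\notin A(X,d)$, a contradiction. Hence $A(X,d)\subseteq A_2(X,d)$, and Lemma \ref{lemordsubset} gives $\text{Ord}A(X,d)\le\text{Ord}A_2(X,d)$.

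For the reverse inequality I would apply the reindexing Lemma \ref{lemord1} to the injection $\phi\colon\mathbb{N}\to\mathbb{N}$ defined by $\phi(i)=2^i$. The key observation is that $\sigma\in A_2(X,d)$ forces $\phi(\sigma)\in A(X,d)$: for an index $j=2^i\in\phi(\sigma)$, a family is $j$-disjoint exactly when it is $2^i$-disjoint, so the non-existence of a uniformly bounded $j$-disjoint cover indexed by $\phi(\sigma)$ is, after the relabelling $\mathcal{U}_{2^i}=\mathcal{V}_i$, literally the non-existence of a uniformly bounded $2^i$-disjoint cover indexed by $\sigma$. Because $\phi$ is injective we also have $|\phi(\sigma)|=|\sigma|$. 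Thus $\phi$ satisfies the hypotheses of Lemma \ref{lemord1} with $M=A_2(X,d)$ and $M'=A(X,d)$, yielding $\text{Ord}A_2(X,d)\le\text{Ord}A(X,d)$. Combining the two inequalities gives the stated equality.

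I do not anticipate a genuine obstacle in this argument; the statement is essentially a rescaling bookkeeping result. The only points demanding care are keeping the direction of the implications straight when translating between the ``no such cover exists'' conditions, and checking that the reindexing map is injective so that cardinalities are preserved and Lemma \ref{lemord1} truly applies.
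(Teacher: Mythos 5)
Your proof is correct and follows exactly the same route as the paper: the inclusion $A(X,d)\subseteq A_{2}(X,d)$ combined with Lemma \ref{lemordsubset} for one direction, and Lemma \ref{lemord1} applied to $\phi(i)=2^{i}$ for the other. You have merely spelled out the verifications that the paper leaves implicit.
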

\begin{proof}
Since $A(X,d)\subseteq A_{2}(X,d)$, Ord$A(X,d)\leq$Ord$A_{2}(X,d)$ by Lemma \ref{lemordsubset}.
The inverse inequality follows from Lemma \ref{lemord1} by considering the exponential function $\phi: \mathbb{N}\to \mathbb{N}$ defined by $\phi(n)=2^{n}$.
\end{proof}

 \begin{defi}(\cite{yanzhu2018})
Every ordinal number $\gamma$ can be represented as $\gamma=\lambda(\gamma)+n(\gamma)$, where $\lambda(\gamma)$ is the limit ordinal or $0$ and $n(\gamma)\in \mathbb{N}\cup \{0\}$. Let $X$ be a metric space, we define \emph{complementary-finite asymptotic dimension} of $X$ (\text{coasdim}$(X)$) inductively as follows:
\begin{itemize}
\item\text{coasdim}$(X)=-1$ $\Leftrightarrow$ $X=\emptyset$,

\item \text{coasdim}$(X)\leq \lambda(\gamma)+n(\gamma)$ $\Leftrightarrow$ for every $r>0$, there exist $r$-disjoint uniformly bounded families $\mathcal{U}_0,\cdots,\mathcal{U}_{n(\gamma)}$ of subsets of $X$ such that \text{coasdim}$(X\setminus \bigcup\bigcup_{i=0}^{n(\gamma)}\mathcal{U}_i)<\lambda(\gamma)$,

\item \text{coasdim}$(X)=\gamma$ $\Leftrightarrow$ \text{coasdim}$(X)\leq \gamma$ and \text{coasdim}$(X)\leq \beta$ is not true
for any $\beta<\gamma$,

\item \text{coasdim}$(X)=\infty$ $\Leftrightarrow$ \text{coasdim}$(X)\leq \gamma$ is not true for any ordinal number $\gamma$.
\end{itemize}
$X$ is said to have \emph{complementary-finite asymptotic dimension} if coasdim$(X)\leq \gamma$ for some ordinal number $\gamma$.

\end{defi}
\begin{remark}
\label{remarkCOASDIMandASDIM}
It is easy to see that for every $n\in\NN \cup \{0\}$, coasdim$(X) \leq n$ if and only if asdim$(X) \leq n$.
\end{remark}

\begin{lem}\rm(\cite{yanzhu2018})
\label{asdimunion}
Let $X$ be a metric space with $X_{1}, X_{2}\subseteq X$. Then
\[\text{coasdim$(X_{1}\cup X_{2})\leq\text{max}\{\text{coasdim}(X_{1}),\text{coasdim}(X_{2})\}$.}
\]
\end{lem}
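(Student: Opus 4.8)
The plan is to argue by transfinite induction on $\gamma = \max\{\text{coasdim}(X_1),\text{coasdim}(X_2)\}$, assuming the statement for every pair of subspaces whose maximal coasdim is strictly smaller than $\gamma$. Write $\gamma = \lambda + n$ with $\lambda = \lambda(\gamma)$ a limit ordinal or $0$ and $n = n(\gamma) \in \NN \cup \{0\}$. I will also use the routine monotonicity of coasdim under passing to subspaces, namely that $Y \subseteq Z$ implies $\text{coasdim}(Y) \leq \text{coasdim}(Z)$; this itself follows by an easy transfinite induction, intersecting the witnessing families for $Z$ with $Y$.

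Fix $r > 0$. Since $\text{coasdim}(X_1) \leq \gamma$, I first apply the definition with parameter $r$ to obtain $r$-disjoint, uniformly bounded families $\mathcal{U}_0^1,\dots,\mathcal{U}_n^1$ of subsets of $X_1$ with $\text{coasdim}(Z_1) < \lambda$, where $Z_1 = X_1 \setminus \bigcup\bigcup_{i=0}^n \mathcal{U}_i^1$; let $D$ be a common diameter bound for these families. Because the disjointness needed to recombine depends on $D$, I only now apply $\text{coasdim}(X_2) \leq \gamma$ with the \emph{larger} parameter $3r + 2D$, obtaining $(3r+2D)$-disjoint, uniformly bounded families $\mathcal{U}_0^2,\dots,\mathcal{U}_n^2$ of subsets of $X_2$ with $\text{coasdim}(Z_2) < \lambda$, where $Z_2 = X_2 \setminus \bigcup\bigcup_{i=0}^n \mathcal{U}_i^2$.

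The heart of the argument, and the step I expect to be the main obstacle, is merging $\mathcal{U}_i^1$ and $\mathcal{U}_i^2$ into a single $r$-disjoint family without creating new close pairs, since the naive union $\mathcal{U}_i^1 \cup \mathcal{U}_i^2$ can fail to be $r$-disjoint. Instead, for each $V \in \mathcal{U}_i^2$ I absorb the nearby members of $\mathcal{U}_i^1$ by setting $V^* = V \cup \bigcup\{U \in \mathcal{U}_i^1 : d(U,V) < r\}$, and then put $\mathcal{W}_i = \{V^* : V \in \mathcal{U}_i^2\} \cup \{U \in \mathcal{U}_i^1 : d(U,V) \geq r \text{ for all } V \in \mathcal{U}_i^2\}$. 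Each $V^*$ lies in $\overline{N_{r+D}(V)}$, so $\text{diam}\,V^* \leq \text{diam}\,V + 2(r+D)$ and the families $\mathcal{W}_0,\dots,\mathcal{W}_n$ are uniformly bounded; a short case check (kept set versus kept set uses the $r$-disjointness of $\mathcal{U}_i^1$; a kept set $U$ versus $V^*$ uses that $U$ stays $r$-away from $V$ and from each absorbed member of $\mathcal{U}_i^1$; and $V^*$ versus $W^*$ uses the inflated $(3r+2D)$-disjointness of $\mathcal{U}_i^2$, giving $d(V^*,W^*) \geq d(V,W) - 2(r+D) \geq r$) shows each $\mathcal{W}_i$ is $r$-disjoint. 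By construction $\bigcup\mathcal{W}_i = \bigcup\mathcal{U}_i^1 \cup \bigcup\mathcal{U}_i^2$, so $\mathcal{W}_0,\dots,\mathcal{W}_n$ are $r$-disjoint uniformly bounded families of subsets of $X_1 \cup X_2$.

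It then remains to control the leftover set $R = (X_1 \cup X_2) \setminus \bigcup\bigcup_{i=0}^n \mathcal{W}_i$. Since $\bigcup\bigcup_i \mathcal{W}_i = \bigcup\bigcup_i \mathcal{U}_i^1 \cup \bigcup\bigcup_i \mathcal{U}_i^2$, any point of $X_1$ (respectively $X_2$) left uncovered must lie in $Z_1$ (respectively $Z_2$), whence $R \subseteq Z_1 \cup Z_2$. Both $Z_1, Z_2$ have coasdim $< \lambda \leq \gamma$, so their maximal coasdim is strictly below $\gamma$ and the induction hypothesis yields $\text{coasdim}(Z_1 \cup Z_2) < \lambda$; by monotonicity $\text{coasdim}(R) < \lambda$ as well. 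As $r > 0$ was arbitrary, the families $\mathcal{W}_0,\dots,\mathcal{W}_n$ witness $\text{coasdim}(X_1 \cup X_2) \leq \lambda + n = \gamma$, completing the induction. The base case $\lambda = 0$ is automatic: the remainders $Z_1, Z_2$ are then empty, and the merging construction is precisely the finite union theorem underlying Remark \ref{remarkCOASDIMandASDIM}.
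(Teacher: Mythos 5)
The paper does not actually prove this lemma; it is quoted from \cite{yanzhu2018}, so there is no in-text argument to compare against. Your proof is the standard one for such union theorems: transfinite induction on $\gamma=\max\{\mathrm{coasdim}(X_1),\mathrm{coasdim}(X_2)\}$ combined with the Bell--Dranishnikov absorption trick for merging two families into one $r$-disjoint family, and the quantitative details check out. Each absorbed $U$ satisfies $U\subseteq N_{r+D}(V)$, so $\mathrm{diam}\,V^*\le\mathrm{diam}\,V+2(r+D)$ and the $(3r+2D)$-disjointness of $\mathcal{U}_i^2$ indeed yields $d(V^*,W^*)\ge r$; the kept-versus-starred case uses exactly the two separations you cite; the union of the $\mathcal{W}_i$ covers $\bigcup\bigcup_i\mathcal{U}_i^1\cup\bigcup\bigcup_i\mathcal{U}_i^2$; and the leftover set lies in $Z_1\cup Z_2$, to which the inductive hypothesis applies because $\max\{\mathrm{coasdim}(Z_1),\mathrm{coasdim}(Z_2)\}<\lambda\le\gamma$. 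One step you use silently and should make explicit: when you ``apply the definition'' of $\mathrm{coasdim}(X_j)\le\gamma$ with the decomposition $\gamma=\lambda+n$, what you actually know is $\mathrm{coasdim}(X_j)\le\beta_j$ for some $\beta_j\le\gamma$ whose canonical decomposition $\lambda(\beta_j)+n(\beta_j)$ may differ from $\lambda+n$, so you need monotonicity of the defining condition in the ordinal parameter. This is true but requires a short case check: if $\lambda(\beta_j)=\lambda$ then $n(\beta_j)\le n$ and one pads with empty families; if $\lambda(\beta_j)<\lambda$ then $\beta_j<\lambda$ and one may take all $n+1$ families empty, since the untouched remainder $X_j$ itself already has $\mathrm{coasdim}<\lambda$. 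Together with the subspace monotonicity you do acknowledge and sketch correctly, this closes the argument.
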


\end{section}

\begin{section}{Main result}
\subsection{Families with given Ord}
For each $\tau=\{k_0,k_1,\cdots,k_s\}\in Fin\mathbb{N}$, we choose an indexation such that $k_0<k_1<\cdots<k_s$. For every $n\in\mathbb{N}\cup\{0\}$,  let
\[
\text{ $K(\tau,n)=\{k_{n+1},k_{n+2}\cdots,k_s\}$ if $n<s$ and $K(\tau,n)=\varnothing$ if $n\geq s$,}\]
and let\[
 \text{ $i(\tau,n)=\text{max}\{k_{0},k_{1}\cdots,k_n\}=k_n$ if $n\leq s$ and $i(\tau,n)=k_s$ if $n> s$.}\]

For each limit ordinal $\alpha$, we fix an increasing sequence $\{\zeta_{i}(\alpha) + i\}$ of ordinals such that
\[
 \text{each $\zeta_{i}(\alpha)$ is a limit ordinal or 0 and $\alpha=\text{sup}_{i}(\zeta_{i}(\alpha) + i)$.}\]
When $\alpha=\text{sup}_{i}(\beta + i)$ for some limit ordinal $\beta$, we put $\zeta_{i}(\alpha)=\beta$ for each $i\in\mathbb{N}$.

For each countable ordinal number $\xi$, we write $\xi=\gamma(\xi)+n(\xi)$, where $\gamma(\xi)$ is a limit ordinal or 0 and
$n(\xi)\in \mathbb{N}\cup\{0\}$.
We build a family $S_{\xi}\subseteq Fin \mathbb{N}$ by induction.

\begin{defi}
Let $n\in\NN$ and let $\xi$ be a countable ordinal number , we define
\begin{itemize}
\item $S_n=\{\sigma\in Fin \mathbb{N}~|~|\sigma|\leq n\}$.
\item $S_{\xi}=S_{\gamma(\xi)+n(\xi)}=\Big\{\sigma\in Fin \mathbb{N}~\Big|~K(\sigma,n(\xi))\in S_{\zeta_{l}(\gamma(\xi))+l}\cup\varnothing~\text{~for~some }l\in\{1,2,\cdots,i(\sigma,n(\xi))\}\Big\}.$
\end{itemize}
\end{defi}

\begin{remark}\label{remarkadd}
\begin{itemize}
\item It follows from the definition of the family $S_{\xi}$ that  $\sigma\in S_\xi$ for each $\sigma\in Fin \mathbb{N}$ with $|\sigma|\le n(\xi)+1$.
\item Note that if $\sigma=\{k_0,\cdots,k_s\}\in S_{\xi}$, then $\{l,k_0,\cdots,k_s\}\in S_{\xi+1}$ for every $l<k_0$ with $l\in \NN$.
\end{itemize}
\end{remark}

\begin{lem}\rm
\label{lemsmall}
Let $\xi=\gamma(\xi)+n(\xi)$ be an infinite ordinal number and $\tau=\{k_0,\cdots,k_s\}\in S_{\xi}$. Then we have
\[\tau\in S_{\zeta_l(\gamma(\xi))+l+n(\xi)+1}\text{ for some }l\in\{1,2,\cdots,i(\tau,n(\xi))\}.\]
\end{lem}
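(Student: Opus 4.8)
The plan is to extract from the membership $\tau\in S_\xi$ the specific witness $l$ and then reduce everything to a single clean induction on the \emph{finite} number $n(\xi)$. First I would note that, since $\xi$ is infinite, $\gamma(\xi)$ is a nonzero limit ordinal, so the defining formula for $S_{\gamma(\xi)+n(\xi)}$ applies and yields an index $l\in\{1,\dots,i(\tau,n(\xi))\}$ with $K(\tau,n(\xi))\in S_{\zeta_l(\gamma(\xi))+l}\cup\{\varnothing\}$. I would keep exactly this $l$: since it already lies in the prescribed range $\{1,\dots,i(\tau,n(\xi))\}$, it will automatically serve as the witness demanded by the conclusion, so no effort is needed to control the index. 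Writing $\eta=\zeta_l(\gamma(\xi))+l$ and $m=n(\xi)$, the target family is precisely $S_{\eta+m+1}$, and the whole lemma collapses to the implication $K(\tau,m)\in S_\eta\cup\{\varnothing\}\ \Rightarrow\ \tau\in S_{\eta+m+1}$.

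The core of the argument is therefore the following general statement, which I would prove for an \emph{arbitrary} ordinal $\eta$ and arbitrary $\rho\in Fin\mathbb{N}$ by induction on $m\in\mathbb{N}\cup\{0\}$: if $K(\rho,m)\in S_\eta\cup\{\varnothing\}$ then $\rho\in S_{\eta+m+1}$. The point is that the induction runs only over the finite parameter $m$, with $\eta$ merely carried along, so no transfinite induction is involved. For the base case $m=0$: if $K(\rho,0)=\varnothing$ then $\rho$ is a singleton and $\rho\in S_{\eta+1}$ follows from the first assertion of the Remark preceding this lemma, because $|\rho|=1\le n(\eta+1)+1$; otherwise $K(\rho,0)=\rho\setminus\{\min\rho\}\in S_\eta$, and prepending the element $\min\rho$ (which is smaller than $\min(\rho\setminus\{\min\rho\})$) via the second assertion of that Remark gives $\rho\in S_{\eta+1}$.

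For the inductive step from $m$ to $m+1$ I would set $\rho'=\rho\setminus\{\min\rho\}$ and use the bookkeeping identity $K(\rho',m)=K(\rho,m+1)$, valid because deleting the minimum and then the next $m+1$ smallest elements removes exactly the $m+2$ smallest elements of $\rho$. If $K(\rho,m+1)=\varnothing$ I again invoke the first assertion of the Remark, since then $|\rho|\le m+2\le n(\eta+m+2)+1$. Otherwise $K(\rho',m)=K(\rho,m+1)\in S_\eta$, the inductive hypothesis gives $\rho'\in S_{\eta+m+1}$, and prepending $\min\rho$ through the second assertion of the Remark upgrades this to $\rho\in S_{(\eta+m+1)+1}=S_{\eta+m+2}$. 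Applying the completed claim with $\eta=\zeta_l(\gamma(\xi))+l$, $m=n(\xi)$ and $\rho=\tau$ then delivers $\tau\in S_{\zeta_l(\gamma(\xi))+l+n(\xi)+1}$, as required.

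The only genuinely delicate point --- and the place where I expect to have to be most careful --- is the arithmetic of the decomposition together with the empty-tail degenerations: I must check that $n(\eta+m+1)=n(\eta)+m+1$ (so that $\gamma(\eta+m+1)=\gamma(\eta)$ absorbs the limit part while the finite part grows exactly by $m+1$), ensuring the cardinality bounds feeding the first assertion of the Remark are correct and free of off-by-one errors. Everything else --- the index matching in the conclusion and the two prepend/small-set steps --- is either automatic or a direct appeal to the Remark.
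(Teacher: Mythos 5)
Your proof is correct and follows essentially the same route as the paper's: both extract the witness $l$ from the defining condition $K(\tau,n(\xi))\in S_{\zeta_l(\gamma(\xi))+l}\cup\varnothing$, split on whether that tail is empty, and then land in $S_{\zeta_l(\gamma(\xi))+l+n(\xi)+1}$ by prepending the $n(\xi)+1$ smallest elements of $\tau$ via Remark 3.2. Your explicit induction on $m=n(\xi)$ simply formalizes the paper's one-line appeal to that remark, so there is no substantive difference.
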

\begin{proof}
Since $\tau=\{k_0,\cdots,k_s\}\in S_{\xi}$, we have $K(\tau,n(\xi))\in S_{\zeta_{l}(\gamma(\xi))+l}\cup\varnothing~\text{~for~some }l\in\{1,2,\cdots,i(\tau,n(\xi))\}$.

\begin{itemize}
\item If $K(\tau,n(\xi))\in S_{\zeta_{l}(\gamma(\xi))+l},$
then $\tau=\{k_0,\cdots,k_{n(\xi)},k_{n(\xi)+1},\cdots,k_s\}\in S_{\zeta_{l}(\gamma(\xi))+l+n(\xi)+1}$ by Remark 3.2.
\item If $K(\tau,n(\xi))=\varnothing$, then
$K(\tau,l+n(\xi)+1)\subseteq K(\tau,n(\xi))$ implies $K(\tau,l+n(\xi)+1)=\varnothing$. So $\tau\in S_{\zeta_l(\gamma(\xi))+l+n(\xi)+1}$.
\end{itemize}
\end{proof}
\begin{lem}\rm
\label{leminclusive}
The family $S_\xi$ is inclusive for each countable ordinal $\xi$.
\end{lem}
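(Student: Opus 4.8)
The plan is to argue by transfinite induction on the countable ordinal $\xi$, exploiting the recursive shape of the definition of $S_\xi$. For the base case, when $\xi$ is finite (that is, $\gamma(\xi)=0$ and $\xi=n(\xi)$), the family $S_\xi=\{\sigma\in Fin\mathbb{N}\mid |\sigma|\le n(\xi)\}$ is inclusive for the trivial reason that $\tau\subseteq\sigma$ forces $|\tau|\le|\sigma|$. For the inductive step I would assume $S_\eta$ is inclusive for every countable ordinal $\eta<\xi$. Here $\gamma(\xi)$ is an infinite limit ordinal, and since $\gamma(\xi)=\sup_i(\zeta_i(\gamma(\xi))+i)$ with the displayed sequence increasing, each $\zeta_l(\gamma(\xi))+l<\gamma(\xi)\le\xi$. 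Hence the inductive hypothesis applies to every auxiliary family $S_{\zeta_l(\gamma(\xi))+l}$ occurring in the definition of $S_\xi$, regardless of whether that index happens to be finite or infinite.

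The core of the argument is a monotonicity comparison of the combinatorial data attached to $\tau$ and to $\sigma$ when $\tau\subseteq\sigma$. Writing $n=n(\xi)$ and $\tau=\{j_0<\cdots<j_t\}\subseteq\sigma=\{k_0<\cdots<k_s\}$, I would first record the elementary fact that the $m$-th element of $\tau$ sits in a position $p_m\ge m$ of $\sigma$, whence $j_m=k_{p_m}\ge k_m$. From this I would extract two facts: (i) $K(\tau,n)\subseteq K(\sigma,n)$, because every element of $K(\tau,n)$ carries an index exceeding $n$ and therefore lands among the elements of $\sigma$ of index exceeding $n$; and (ii) in the regime $n\le t$ one has $i(\tau,n)=j_n\ge k_n=i(\sigma,n)$, so the admissible range of witnesses for $\tau$ contains that for $\sigma$.

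With (i) and (ii) in hand I would split on whether $K(\tau,n)$ is empty. If $K(\tau,n)=\varnothing$, then $|\tau|\le n+1=n(\xi)+1$, so $\tau\in S_\xi$ immediately by the first item of Remark~3.2. If instead $K(\tau,n)\ne\varnothing$, then $n<t$, so by (i) also $K(\sigma,n)\ne\varnothing$; since $\sigma\in S_\xi$ there is $l\in\{1,\dots,i(\sigma,n)\}$ with $K(\sigma,n)\in S_{\zeta_l(\gamma(\xi))+l}$, and invoking the inductive hypothesis (inclusiveness of $S_{\zeta_l(\gamma(\xi))+l}$) on $\varnothing\ne K(\tau,n)\subseteq K(\sigma,n)$ gives $K(\tau,n)\in S_{\zeta_l(\gamma(\xi))+l}$. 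Finally (ii) yields $l\le i(\sigma,n)\le i(\tau,n)$, so this same $l$ witnesses $\tau\in S_\xi$, completing the induction.

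The main obstacle I anticipate is entirely bookkeeping: tracking the index operators $K(\cdot,n)$ and $i(\cdot,n)$ faithfully across the subset relation, and in particular confirming that a witness $l$ legal for $\sigma$ still lies in the admissible range $\{1,\dots,i(\tau,n)\}$ for $\tau$. Two spots call for genuine care. First, one must isolate the \emph{non-empty} branch of the definition before applying the inductive inclusiveness hypothesis, since inclusiveness is a statement about honest (non-empty) members of $Fin\mathbb{N}$. Second, the empty-$K$ corner rests on the admissible range being non-empty, i.e.\ on $i(\tau,n)\ge1$, which is the convention (implicit already in Remark~3.2) that $\mathbb{N}$ has no element $0$; everything else reduces to the two monotonicity facts above.
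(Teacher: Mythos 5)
Your argument is correct and is essentially the paper's own proof: the same transfinite induction, the same case split (your ``$K(\tau,n)=\varnothing$'' branch is exactly the paper's ``$t\le n(\xi)$'' branch), and the same two monotonicity observations $K(\tau,n)\subseteq K(\sigma,n)$ and $i(\sigma,n)=k_{n}\le j_{n}=i(\tau,n)$ to transport the witness $l$. Your explicit remarks that $\zeta_l(\gamma(\xi))+l<\xi$ (so the inductive hypothesis genuinely applies) and that the non-empty branch must be isolated before invoking inclusiveness are welcome extra care, but not a different route.
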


\begin{proof}
\begin{itemize}
\item It follows from the definition of $S_\xi$ that the result is true when $\xi$ is finite.
\item Assume that the result holds for every infinite ordinal number $\xi<\alpha$.
Now for $\xi=\alpha$,  let $\tau=\{k_{j_0},k_{j_1},\cdots,k_{j_t}\}\subseteq\sigma=\{k_0,\cdots,k_s\}\in S_{\xi}$.
Note that $t\leq s$ and $K(\tau,n(\xi))\subseteq~K(\sigma,n(\xi))$. If $t\leq n(\xi)$, then we have $\tau\in S_{\xi}$ by definition.
Consider the case $t> n(\xi)$. Then we have  $s> n(\xi)$ and
\[
K(\sigma,n(\xi))\in S_{\zeta_{l}(\gamma(\xi))+l}~\text{~for~some }1\leq l\leq i(\sigma,n(\xi))= k_{n(\xi)}\leq k_{j_{n(\xi)}}=i(\tau,n(\xi)).
\]
 By inductive assumption, we have $
K(\tau,n(\xi))\in S_{\zeta_{l}(\gamma(\xi))+l}$.
So  $\tau \in S_{\gamma(\xi)+n(\xi)}$.
\end{itemize}

\end{proof}
\begin{lem}\rm
\label{lemsubset}
$S_{\gamma+n}\subseteq S_{\gamma+m}$ , where $\gamma$ is a limit ordinal and $ m\in\NN,n\in\NN\cup\{0\}$ such that $n<m$.
\end{lem}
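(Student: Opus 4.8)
The plan is to argue directly from the recursive clause defining $S_{\gamma+k}$, exploiting two elementary monotonicity properties of the auxiliary functions $K$ and $i$ in their second argument. Since $\gamma$ is a limit ordinal, for every finite $k$ we have $\gamma(\gamma+k)=\gamma$ and $n(\gamma+k)=k$, so membership reads: $\sigma\in S_{\gamma+k}$ iff $K(\sigma,k)\in S_{\zeta_l(\gamma)+l}\cup\{\varnothing\}$ for some $l\in\{1,\dots,i(\sigma,k)\}$. Writing $\sigma=\{k_0<\dots<k_s\}$, the definitions of $K$ and $i$ give at once, for $n<m$, the inclusion $K(\sigma,m)\subseteq K(\sigma,n)$ and the inequality $i(\sigma,n)\le i(\sigma,m)$: passing from $n$ to $m$ only deletes further small elements from the tail $K(\sigma,\cdot)$, while the threshold $i(\sigma,\cdot)=k_{\min(\cdot,s)}$ can only increase. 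These two observations are the engine of the proof.

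First I would fix $\sigma\in S_{\gamma+n}$ and take a witness index $l\in\{1,\dots,i(\sigma,n)\}$ with $K(\sigma,n)\in S_{\zeta_l(\gamma)+l}\cup\{\varnothing\}$. Because $i(\sigma,n)\le i(\sigma,m)$, this same $l$ automatically lies in the admissible range $\{1,\dots,i(\sigma,m)\}$ required for membership in $S_{\gamma+m}$, so no new index needs to be produced; it remains only to verify that $K(\sigma,m)\in S_{\zeta_l(\gamma)+l}\cup\{\varnothing\}$.

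Next I would split on whether $K(\sigma,m)$ is empty. If $K(\sigma,m)=\varnothing$, then the $\{\varnothing\}$ alternative is satisfied for the index $l$, which already lies in $\{1,\dots,i(\sigma,m)\}$, so $\sigma\in S_{\gamma+m}$ immediately. If $K(\sigma,m)\neq\varnothing$, then a fortiori $K(\sigma,n)\neq\varnothing$ since $K(\sigma,m)\subseteq K(\sigma,n)$; hence the witness forces $K(\sigma,n)\in S_{\zeta_l(\gamma)+l}$, the empty-set alternative being excluded. Applying the inclusiveness of $S_{\zeta_l(\gamma)+l}$ from Lemma \ref{leminclusive} to the nonempty inclusion $K(\sigma,m)\subseteq K(\sigma,n)$ yields $K(\sigma,m)\in S_{\zeta_l(\gamma)+l}$, so again $\sigma\in S_{\gamma+m}$. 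This exhausts both cases and gives $S_{\gamma+n}\subseteq S_{\gamma+m}$.

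The argument needs no induction on $\gamma$, because inclusiveness of $S_{\zeta_l(\gamma)+l}$ is available for every ordinal through Lemma \ref{leminclusive} (covering both the case $\zeta_l(\gamma)=0$, where $S_{\zeta_l(\gamma)+l}=S_l$ is trivially inclusive, and the limit case). The only genuine bookkeeping—and the step most prone to slips—is the consistent treatment of the empty-set clause together with the check that the chosen $l$ stays in range after enlarging the second argument. Once the two monotonicities $K(\sigma,m)\subseteq K(\sigma,n)$ and $i(\sigma,n)\le i(\sigma,m)$ are recorded, everything reduces to this clean dichotomy, so I expect no real difficulty beyond careful handling of the $\varnothing$ case.
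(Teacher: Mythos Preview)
Your argument is correct and essentially the same as the paper's: the paper also records $K(\sigma,m)\subseteq K(\sigma,n)$ and $i(\sigma,n)\le i(\sigma,m)$, then invokes the inclusiveness of $S_{\zeta_l(\gamma)+l}$ from Lemma~\ref{leminclusive}. The only cosmetic difference is that the paper splits on $s\le m$ versus $s>m$ (where $|\sigma|=s+1$), which is exactly your dichotomy $K(\sigma,m)=\varnothing$ versus $K(\sigma,m)\ne\varnothing$; in the empty case the paper cites Remark~3.2 rather than the $\varnothing$-clause directly.
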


\begin{proof}
For every $\sigma=\{k_0,\cdots,k_s\}\in S_{\gamma+n}$.
\begin{itemize}
\item If $s\leq m$,  then we have $\sigma\in S_{\gamma+m}$ by definition.
\item If $s>m$, then $s>n$. It follows that
\[\text{$K(\sigma,n)\in S_{\zeta_{l}(\gamma)+l}~\text{~for~some }1\leq l\leq i(\sigma,n)=k_{n}< k_{m}=i(\sigma,m).$}\]
Since $n<m$, we have
$
K(\sigma,m)\subseteq K(\sigma,n).$
Since $S_{\zeta_{l}(\gamma)+l}$ is inclusive by Lemma \ref{leminclusive},
we have
\[
K(\sigma,m)\in S_{\zeta_{l}(\gamma)+l}\text{~for~some }l\in\{1,2,\cdots, i(\sigma,m)\}.\]
Hence $\sigma\in S_{\gamma+m}.$
\end{itemize}
\end{proof}

\begin{lem}\rm
\label{lemlessthan}
 For each countable ordinals $\xi$, Ord$S_{\xi}\leq \xi$.
 \end{lem}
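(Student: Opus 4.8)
The plan is to argue by transfinite induction on $\xi$, invoking at each stage the recursive clause defining the ordinal function: $\text{Ord}M\le\alpha$ holds if and only if $\text{Ord}M^{a}<\alpha$ for every $a\in\NN$. When $\xi=n$ is finite every $\sigma\in S_{n}$ satisfies $|\sigma|\le n$, so Lemma \ref{lemord0} gives $\text{Ord}S_{n}\le n$ at once; this is the base case.

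For the inductive step I assume $\text{Ord}S_{\eta}\le\eta$ for all $\eta<\xi$ and write $\xi=\gamma+n$ with $\gamma$ a limit ordinal (or $0$) and $n\in\NN\cup\{0\}$, $\xi$ infinite. It is enough to fix $a\in\NN$ and bound $\text{Ord}(S_{\xi})^{a}$ by an ordinal strictly below $\xi$. So let $\tau\in(S_{\xi})^{a}$ and put $\sigma=\{a\}\cup\tau=\{k_{0}<\dots<k_{s}\}\in S_{\xi}$, with $a=k_{j}$. The core of the argument is to locate $\tau$ inside a family $S_{\eta}$ with $\eta<\xi$ by comparing the data $K(\sigma,n),i(\sigma,n)$ witnessing $\sigma\in S_{\xi}$ with the corresponding data $K(\tau,n-1),i(\tau,n-1)$ for $\tau$. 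I split according to the position $j$ of $a$. If $|\tau|\le n$, or if $a$ lies among the $n$ smallest entries of $\sigma$ (that is $j\le n-1$), then $K(\tau,n-1)=K(\sigma,n)$ while $i(\tau,n-1)=k_{n}$ dominates the witnessing index of $\sigma$, so $\tau\in S_{\gamma+(n-1)}$ directly. If instead $a$ is the $(n+1)$-st entry ($j=n$) or lies in the tail ($j>n$), then $\tau$ is obtained from a set already known to lie in some $S_{\zeta_{l}(\gamma)+l}$ by prepending finitely many strictly smaller elements; hence Lemma \ref{leminclusive} together with the second part of Remark 3.2 yields $\tau\in S_{\zeta_{l}(\gamma)+l+n}$ (resp.\ $\tau\in S_{\zeta_{l}(\gamma)+l+n+1}$), where crucially $l\le k_{n}\le a$.

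Since in these last two subcases $l$ runs over the finite set $\{1,\dots,a\}$, the computation exhibits $(S_{\xi})^{a}$ as a subset of the finite union of $S_{\gamma+(n-1)}$ and the families $S_{\zeta_{l}(\gamma)+l+n},\,S_{\zeta_{l}(\gamma)+l+n+1}$ for $1\le l\le a$. Because $\gamma$ is a limit ordinal, $\zeta_{l}(\gamma)+l<\gamma$ forces $\zeta_{l}(\gamma)+l+m<\gamma$ for every finite $m$, so by the induction hypothesis, Lemma \ref{lemordsubset} and Lemma \ref{lemord2} the value of $\text{Ord}$ on each piece other than $S_{\gamma+(n-1)}$ stays below $\gamma$. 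Taking the maximum gives $\text{Ord}(S_{\xi})^{a}\le\gamma+(n-1)<\xi$ in the successor case $n\ge1$, and $\text{Ord}(S_{\gamma})^{a}\le\zeta_{a}(\gamma)+a+1<\gamma$ in the limit case $n=0$. As this holds for every $a$, we conclude $\text{Ord}S_{\xi}\le\xi$; Lemma \ref{lemsubset} is used along the way to merge the indices $\zeta_{l}(\gamma)+l$ and $\zeta_{l}(\gamma)+l+1$.

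I expect the main difficulty to be exactly the second subcase. Deleting $a$ does not simply carry $\sigma\in S_{\xi}$ to $\tau\in S_{\xi-1}$, because the defining condition of $S_{\gamma+n}$ is sensitive to which elements are the $n+1$ smallest; when $a$ is the $(n+1)$-st element or sits in the tail, $\tau$ genuinely may fail to belong to $S_{\gamma+(n-1)}$ (one already sees this for $\xi=\omega+1$). The remedy is to absorb such $\tau$ into the union of the shifted families $S_{\zeta_{l}(\gamma)+l+\,\cdot}$, and the decisive point is to verify that the relevant index satisfies $l\le a$ in precisely these subcases, which keeps the union finite and the resulting bound on $\text{Ord}$ uniform and strictly below $\xi$.
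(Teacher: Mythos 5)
Your proposal is correct and follows essentially the same route as the paper: decompose $(S_{\xi})^{a}$ according to whether $a$ falls among the $n(\xi)$ smallest entries of $\{a\}\sqcup\tau$ (equivalently whether $i(\{a\}\sqcup\tau,n(\xi))>a$), place the first kind of $\tau$ in $S_{\gamma(\xi)+n(\xi)-1}$ and the second kind in $S_{\zeta_{l}(\gamma(\xi))+l+n(\xi)+1}$ with $l\le a$ via inclusivity and Remark 3.2, and then bound the Ord of the finite union using Lemmas \ref{lemordsubset} and \ref{lemord2} together with the induction hypothesis, treating the limit case $n(\xi)=0$ separately exactly as the paper does.
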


\begin{proof}
 By Lemma \ref{lemord0}, the result is true when $\xi$ is finite. Assume that the result holds for every $\xi<\alpha$. Now $\xi=\alpha$.
For every $a\in\NN$ and  $\sigma\in S_{\xi}^a$,  $\{a\}\sqcup \sigma \in S_{\xi}.$ It follows that
\[K(\{a\}\sqcup \sigma,n(\xi))\in S_{\zeta_{l}(\gamma(\xi))+l}\cup\varnothing~\text{for~some } l\in\{1,2,\cdots, i(\{a\}\sqcup\sigma,n(\xi))\}.\]
\begin{itemize}
\item When $n(\xi)\in\NN$.
\begin{itemize}
 \item If $i(\{a\}\sqcup\sigma,n(\xi))>a$, then $K(\sigma, n(\xi)-1)=K(\{a\}\sqcup \sigma, n(\xi))\in S_{\zeta_l(\gamma(\xi))+l}\cup\varnothing$.
Note that\[ 1\leq l\leq i(\{a\}\sqcup\sigma,n(\xi))=i(\sigma, n(\xi)-1)\text{~which implies that~} \sigma\in S_{\gamma(\xi)+n(\xi)-1}.\]
\item If $i(\{a\}\sqcup\sigma,n(\xi))\leq a$, then
$l\leq a\text{ and }K(\sigma, n(\xi))\subseteq K(\{a\}\sqcup \sigma, n(\xi)).$
Since $ S_{\zeta_{l}(\gamma(\xi))+l}$ is inclusive, $K(\sigma, n(\xi))\in S_{\zeta_{l}(\gamma(\xi))+l}\cup\varnothing$.
So by Remark 3.2, $\sigma\in S_{\zeta_l(\gamma(\xi))+l+n(\xi)+1}$ for some $ l\in\{1,2,\cdots, a\}$.
\end{itemize}
Therefore,\[S_{\xi}^a\subseteq  S_{\gamma(\xi)+n(\xi)-1}\cup(\bigcup_{l=1}^{a} S_{\zeta_l(\gamma(\xi))+n(\xi)+l+1}) .\]
Then by Lemma \ref{lemord2} and inductive assumption,
\[
\text{Ord}(S_{\xi}^a)\leq \gamma(\xi)+n(\xi)-1<\xi.
\]
\item When $n(\xi)=0$.
\begin{itemize}
\item If $i(\{a\}\sqcup\sigma,0)=a$, then $\sigma=K(\{a\}\sqcup \sigma,0)\in S_{\zeta_{l}(\gamma(\xi))+l}\cup\varnothing$, for some $l\in\{1,2,\cdots, a\}$.
\item If $i(\{a\}\sqcup\sigma,0)<a$, then \[
\text{$K(\sigma,0)\subseteq K(\{a\}\sqcup \sigma,0)\in S_{\zeta_{l}(\gamma(\xi))+l}\cup\varnothing~\text{for~some } l\in\{1,2,\cdots,i(\{a\}\sqcup\sigma,0)\}$, }\]which implies $\sigma\in S_{\zeta_{l}(\gamma(\xi))+l+1}$ for some $l< a$, by Lemma \ref{leminclusive}.
 \end{itemize}
So by Lemma \ref{lemsubset},\[S_{\xi}^a\subseteq \bigcup_{l=1}^{a} S_{\zeta_l(\gamma(\xi))+l+1} .\]
Then by Lemma \ref{lemord2} and inductive assumption,
\[
\text{Ord}(S_{\xi}^a)\leq\zeta_a(\gamma(\xi))+a+1<\xi.
\]

\end{itemize}
So in both cases, we have Ord$S_{\xi}\leq \xi$.

\end{proof}

Let $\sigma=\{k_0,\cdots,k_s\}\in Fin \NN$ and $\tau=\{p_0,\cdots,p_s\}\in Fin \NN$ .
We denote $\sigma\leq\tau$ if for each $i\in\{0,1,\cdots,s\}$, $k_i\leq p_i$.

\begin{lem}\rm
\label{lemgreat}
Let $\xi$ be an countable ordinal number and $\sigma\in S_{\xi}$. Then for each $\tau\in \text{Fin} \NN$ such that
$\sigma\leq\tau$, we have $\tau\in S_{\xi}$.
\begin{proof}
 It is easy to see the result is true when $\xi$ is finite. Assume that the result holds for every countable ordinal number $\xi<\alpha$. For $\xi=\alpha$, let $\sigma=\{k_0,\cdots,k_s\}\in S_{\xi}$ and $\tau=\{p_0,\cdots,p_s\}$ such that $k_i\leq p_i$ for $i\in\{0,1,\cdots,s\}$.
\begin{itemize}
\item If $s\leq n(\xi)$, then $\tau\in S_{\xi}$ by Remark 3.2.
 \item If $s> n(\xi)$, then $K(\sigma,n(\xi))\in S_{\zeta_{l}(\xi)+l} $ for some $1\leq l\leq i(\sigma,n(\xi))= k_{n(\xi)}\leq p_{n(\xi)}=i(\tau,n(\xi))$.
Note that $K(\sigma,n(\xi))\leq K(\tau,n(\xi))$.   By inductive assumption,\[ \text{$K(\tau,n(\xi))\in S_{\zeta_{l}(\xi)+l} $ for some $l\in\{1,2,\cdots,i(\tau,n(\xi))\}$.}\]
So $\tau\in S_{\xi}$.
\end{itemize}
\end{proof}
\end{lem}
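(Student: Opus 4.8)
The plan is to argue by transfinite induction on $\xi$, peeling off the recursive layer in the definition of $S_\xi$ at each step. The first thing I would record is that the relation $\sigma\le\tau$ is only defined for sets of equal cardinality, so $|\tau|=|\sigma|$ throughout; this makes the finite base case immediate, since $S_n$ is determined by cardinality alone and $|\tau|=|\sigma|\le n$ forces $\tau\in S_n$.

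For the inductive step I would assume the claim for every countable ordinal below $\alpha$ and take $\xi=\alpha$ infinite, writing $\sigma=\{k_0,\dots,k_s\}$ and $\tau=\{p_0,\dots,p_s\}$ with $k_i\le p_i$. I then split on the size of $\sigma$ against $n(\xi)$. If $s\le n(\xi)$, then $|\tau|=s+1\le n(\xi)+1$, and Remark 3.2 places every such set in $S_\xi$, so $\tau\in S_\xi$ with nothing further to check. The genuine case is $s>n(\xi)$: unwinding the definition of $S_\xi$ yields a witness $l$ with $1\le l\le i(\sigma,n(\xi))=k_{n(\xi)}$ and $K(\sigma,n(\xi))\in S_{\zeta_l(\gamma(\xi))+l}$ (the set $K(\sigma,n(\xi))$ being nonempty here, the ``$\cup\varnothing$'' alternative does not occur).

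The engine of the induction is the monotonicity of truncation and of the index function under $\le$: from $k_i\le p_i$ I obtain $K(\sigma,n(\xi))\le K(\tau,n(\xi))$ (equal cardinality, element-wise domination of the tails) and $i(\sigma,n(\xi))=k_{n(\xi)}\le p_{n(\xi)}=i(\tau,n(\xi))$. Before invoking the inductive hypothesis on $K(\sigma,n(\xi))\le K(\tau,n(\xi))$ I would check that its ordinal is admissible, i.e. $\zeta_l(\gamma(\xi))+l<\alpha$; this holds because $\{\zeta_i(\gamma(\xi))+i\}$ is increasing with supremum $\gamma(\xi)$, so each term is strictly below $\gamma(\xi)\le\xi$. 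The hypothesis then gives $K(\tau,n(\xi))\in S_{\zeta_l(\gamma(\xi))+l}$, and since the same $l$ obeys $l\le k_{n(\xi)}\le p_{n(\xi)}=i(\tau,n(\xi))$, the defining condition for $S_\xi$ is met and $\tau\in S_\xi$.

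The hard part, such as it is, will be the bookkeeping around the witness index rather than any real conceptual difficulty: I must ensure that the very same $l$ certifying $K(\sigma,n(\xi))$ still lies in the admissible range for $\tau$, which is precisely the monotonicity $i(\sigma,n(\xi))\le i(\tau,n(\xi))$. Once that is in hand, the whole argument reduces to a single application of the inductive hypothesis at the strictly smaller ordinal $\zeta_l(\gamma(\xi))+l$.
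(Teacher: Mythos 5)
Your proposal is correct and follows essentially the same route as the paper's own proof: the same transfinite induction, the same case split on $s$ versus $n(\xi)$, and the same key observation that $K(\sigma,n(\xi))\leq K(\tau,n(\xi))$ together with $i(\sigma,n(\xi))\leq i(\tau,n(\xi))$ lets the same witness $l$ certify $\tau\in S_\xi$. Your added checks (that $\leq$ forces $|\tau|=|\sigma|$, and that $\zeta_l(\gamma(\xi))+l<\alpha$ so the inductive hypothesis is legitimately applicable) are details the paper leaves implicit, but they do not change the argument.
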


\begin{prop}\rm
\label{propord}
 Ord$S_{\xi}= \xi$ for each countable ordinal number $\xi$.
\end{prop}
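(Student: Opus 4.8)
The plan is to establish the reverse inequality $\text{Ord}\, S_\xi \geq \xi$, since Lemma \ref{lemlessthan} already supplies $\text{Ord}\, S_\xi \leq \xi$. I would argue by transfinite induction on $\xi$. For finite $\xi = n$ the family $S_n$ contains every subset of cardinality exactly $n$, so Lemma \ref{lemord0} forbids $\text{Ord}\, S_n \leq n-1$; together with the upper bound this gives $\text{Ord}\, S_n = n$. For the inductive step I assume $\text{Ord}\, S_\eta = \eta$ for every countable $\eta < \xi$ and split into the successor case $n(\xi)\ge 1$ and the limit case $n(\xi)=0$, paralleling the structure of the proof of Lemma \ref{lemlessthan}.

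The recurring mechanism is to bound $\text{Ord}\, S_\xi^a$ from below by planting an order-isomorphic copy of some $S_{\xi'}$ with $\xi' < \xi$ inside $S_\xi^a$. Concretely, for the infinite set $K = \{a+1, a+2, \ldots\}$ every element of $S_{\xi'}[K]$ is a shift $\{a+k_1, \ldots, a+k_m\}$ of some $\{k_1,\dots,k_m\}\in S_{\xi'}$; by Lemma \ref{lemgreat} such a shift again lies in $S_{\xi'}$ and has minimum $> a$. Prepending the element $a$ then keeps us inside $S_\xi$, so the shifted set belongs to $S_\xi^a$. This yields $S_{\xi'}[K] \subseteq S_\xi^a$, and Corollary \ref{coroord}, Lemma \ref{lemordsubset} and the inductive hypothesis combine to give
\[
\text{Ord}\, S_\xi^a \ \geq\ \text{Ord}\, S_{\xi'}[K] \ =\ \text{Ord}\, S_{\xi'} \ =\ \xi'.
\]

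In the successor case I take $\xi' = \xi - 1 = \gamma(\xi) + (n(\xi)-1)$ and $a = 1$; the statement that prepending $1 < \min\sigma$ produces an element of $S_\xi$ is precisely the second bullet of Remark \ref{remarkadd}. This gives $\text{Ord}\, S_\xi^1 \geq \xi - 1$, so $\text{Ord}\, S_\xi \not\leq \xi - 1$ and hence $\text{Ord}\, S_\xi \geq \xi$. In the limit case I read membership of $\{a\}\cup\sigma$ in $S_\xi$ straight off the defining clause: when $\min\sigma > a$ it amounts to $\sigma \in S_{\zeta_l(\xi)+l}$ for some $l$ in the admissible range $\{1,\dots,i(\{a\}\cup\sigma,0)\}=\{1,\dots,a\}$, so choosing $\xi' = \zeta_a(\xi)+a$ (with $l=a$) gives $\text{Ord}\, S_\xi^a \geq \zeta_a(\xi)+a$ for every $a$. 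Since the fixed sequence is cofinal in $\xi$, namely $\sup_a(\zeta_a(\xi)+a)=\xi$, for each $\eta < \xi$ some $a$ satisfies $\text{Ord}\, S_\xi^a \geq \zeta_a(\xi)+a > \eta$, ruling out $\text{Ord}\, S_\xi \leq \eta$; as $\eta<\xi$ was arbitrary, $\text{Ord}\, S_\xi \geq \xi$.

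I expect the main obstacle to be the bookkeeping in verifying the inclusion $S_{\xi'}[K] \subseteq S_\xi^a$ rather than any deep idea: one must simultaneously check that the order-preserving relabeling pushes sets back into $S_{\xi'}$ (this is where Lemma \ref{lemgreat} is essential), that the prepended minimum $a$ is genuinely smaller than every other element, and that the witnessing index $l$ can be taken within $\{1,\dots,a\}$. The limit case carries the extra subtlety that the approximating ordinals are the $\zeta_a(\xi)+a$, so the final conclusion leans on the cofinal sequence fixed by the standing convention before the definition of $S_\xi$ converging to $\xi$.
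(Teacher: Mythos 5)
Your proposal is correct and follows essentially the same route as the paper: transfinite induction with the upper bound from Lemma \ref{lemlessthan}, then a lower bound on $\mathrm{Ord}\,S_\xi^a$ obtained by planting a shifted copy of $S_{\xi-1}$ (successor case) or of $S_{\zeta_a(\xi)+a}$ (limit case) inside $S_\xi^a$ via Remark 3.2, Lemma \ref{lemgreat} and the order-preserving relabeling. The only cosmetic difference is that you package the shift as $S_{\xi'}[K]$ and invoke Corollary \ref{coroord}, whereas the paper defines the subfamily $S_{\xi'}(k)$ of sets with all elements exceeding $k$ and reproves the same Ord-equality inline with Lemma \ref{lemord1}.
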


\begin{proof}
  By Lemma \ref{lemord0}, it is easy to obtain that the result is true when $\xi$ is finite.  Assume that the result holds for every countable ordinal number  $\xi<\alpha$. For $\xi=\alpha$.
By Lemma \ref{lemlessthan}, it suffices to show that Ord$S_{\xi}\geq\xi$.

\begin{itemize}

\item If $n(\xi)=0$, then it means that $\xi$ is a limit ordinal.  For every $k\in\NN$, let \[\text{$S_{\zeta_{k}(\xi)+k}(k)=\{\tau\in S_{\zeta_{k}(\xi)+k}\mid t>k$ for each $t\in\tau\}$.}\]  Then for each $\tau\in S_{\zeta_{k}(\xi)+k}(k)$, $\{k\}\sqcup \tau\in S_\xi$ by definition. Since $S_\xi$ is inclusive by Lemma \ref{leminclusive}, $\tau\in S_\xi$. So $S_{\zeta_{k}(\xi)+k}(k)\subseteq S_\xi$.

    Note that\[ \text{Ord} S_{\zeta_{k}(\xi)+k}=\text{Ord} S_{\zeta_{k}(\xi)+k}(k). \]
    Indeed, by Lemma \ref{lemordsubset}, $S_{\zeta_{k}(\xi)+k}(k)\subseteq S_{\zeta_{k}(\xi)+k}$ implies $\text{Ord}  S_{\zeta_{k}(\xi)+k}(k)\leq \text{Ord}S_{\zeta_{k}(\xi)+k}.$ There is a function $\phi:\NN\rightarrow\NN$ defined by $\phi(i)=i+k$. Then by Lemma \ref{lemgreat},\[\text{ $\phi(\sigma)\in S_{\zeta_{k}(\xi)+k}(k)$ whenever $\sigma\in S_{\zeta_{k}(\xi)+k}$ and $|\phi(\sigma)|=|\sigma|$}.\]
     By Lemma \ref{lemord1}, $\text{Ord}  S_{\zeta_{k}(\xi)+k}\leq \text{Ord}S_{\zeta_{k}(\xi)+k}(k).$ So $\text{Ord} S_{\zeta_{k}(\xi)+k}=\text{Ord} S_{\zeta_{k}(\xi)+k}(k).$

    By Lemma \ref{lemordsubset} and inductive assumption,
    \[\text{Ord} S_\xi\ge\text{Ord}  S_{\zeta_{k}(\xi)+k}(k)=\text{Ord}  S_{\zeta_{k}(\xi)+k}=\zeta_{k}(\xi)+k \text{~for each~}k\in\NN.
    \]
   Hence $\text{Ord}  S_\xi\ge \xi$.

\item If $n(\xi)\in\NN$, then for every $k\in\NN$, let \[\text{$S_{\xi-1}(k)=\{\tau\in S_{\xi-1}\mid t>k$ for each $t\in\tau$\}.}\]Then for each $\tau\in S_{\xi-1}(k)$, by Remark 3.2, $\{k\}\sqcup \tau\in S_\xi$ which implies $\tau\in S_\xi^{k}$. Then $S_{\xi-1}(k)\subseteq S_\xi^{k}$. By the similar argument and inductive assumption,
    \[
    \text{Ord}S_\xi^{k}\geq \text{Ord}S_{\xi-1}(k)=\text{Ord} S_{\xi-1}=\xi-1.
    \]
    Hence $\text{Ord}  S_\xi\ge \xi$.

\end{itemize}

\end{proof}

\subsection{Construction of spaces $X_{\xi}$}

\begin{defi}(\cite{Engelking})
Let $X$ be a metric space and let $A,B$ be a pair of disjoint subsets of $X$. We say that a subset $L\subseteq X$ is a \emph{partition}
of $X$ between $A$ and $B$ if there exist open sets $U,W\subseteq X$ satisfying the following conditions
$$A\subseteq U, B\subseteq W\text{ and }X=U\sqcup L\sqcup W.$$
\end{defi}

\begin{defi}(\cite{omega+k})
Let $X$ be a metric space and let $A,B$ be a pair of disjoint subsets of $X$. For every $\epsilon>0$, we say that a subset $L\subseteq X$ is an \emph{$\epsilon$-partition}
of $X$ between $A$ and $B$ if there exist open sets $U,W\subseteq X$ satisfying the following conditions
$$A\subseteq U, B\subseteq W, X=U\sqcup L\sqcup W, d(L,A)>\epsilon\text{ and }d(L,B)>\epsilon$$
Clearly, an $\epsilon$-partition $L$ of $X$ between $A$ and $B$ is a partition of $X$ between $A$ and $B$.
\end{defi}
\begin{lem}\rm(\cite{Engelking}, Lemma 1.8.14)
\label{partition}
Let $F_i^{+}$, $F_i^{-}$, where $i=1,2,\ldots,n$, be the pairs of opposite faces of $I^n\doteq [0,1]^n$. If $I^n=L_0\supset L_1\supset \ldots\supset L_n$ is a decreasing sequence of closed sets such that $L_{i+1}$ is a partition of $L_{i}$ between $L_{i}\cap F_{i+1}^{+}$ and $L_{i}\cap F_{i+1}^{-}$ for $i\in\{0,1,2,\ldots,n-1\}$, then $L_{n}\neq \varnothing$.
\end{lem}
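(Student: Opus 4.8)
\noindent The plan is to reduce the statement to the classical Brouwer-type theorem on the cube and then prove that theorem by a degree argument. Call the auxiliary statement the \emph{cube theorem}: if $M_1,\dots,M_n$ are subsets of $I^n$ such that each $M_i$ is a partition of $I^n$ between the opposite faces $F_i^{+}$ and $F_i^{-}$, then $\bigcap_{i=1}^{n}M_i\neq\varnothing$. Granting this, the lemma follows by promoting each $L_i$, which is only a partition \emph{inside} $L_{i-1}$, to a partition of the whole cube. The tool is the standard partition-extension fact: if $C\subseteq I^n$ is closed and $L'$ is a partition of $C$ between $C\cap F_i^{-}$ and $C\cap F_i^{+}$, then there is a partition $M$ of $I^n$ between $F_i^{-}$ and $F_i^{+}$ with $M\cap C=L'$. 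To get it, I would represent $L'$ as the zero set of a continuous $f\colon C\to[-1,1]$ with $f\equiv -1$ on $C\cap F_i^{-}$ and $f\equiv +1$ on $C\cap F_i^{+}$ (a routine Urysohn-plus-distance construction from the defining open sets of the partition), glue $f$ to the constant values $\mp1$ on the disjoint closed faces $F_i^{-},F_i^{+}$, extend over $I^n$ by the Tietze theorem, and let $M$ be the zero set of the extension.

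\noindent With this, applying the extension fact to $C=L_{i-1}$ and the given partition $L_i$ produces, for each $i\in\{1,\dots,n\}$, a partition $M_i$ of $I^n$ between $F_i^{-}$ and $F_i^{+}$ with $M_i\cap L_{i-1}=L_i$. I then claim $\bigcap_{i=1}^{n}M_i\subseteq L_n$. Indeed, since $L_0=I^n$ we have $M_1=M_1\cap L_0=L_1$; and if $x$ lies in every $M_i$, then an induction gives $x\in L_i$ for all $i$: assuming $x\in L_{i-1}$, the relations $x\in M_i$ and $M_i\cap L_{i-1}=L_i$ force $x\in L_i$. In particular $x\in L_n$. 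The cube theorem gives $\bigcap_{i=1}^{n}M_i\neq\varnothing$, whence $L_n\neq\varnothing$, as required.

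\noindent It remains to prove the cube theorem, which is where the real content lies. For each $i$ pick a continuous $f_i\colon I^n\to[-1,1]$ with $f_i\equiv -1$ on $F_i^{-}$, $f_i\equiv +1$ on $F_i^{+}$ and $f_i^{-1}(0)=M_i$, and set $f=(f_1,\dots,f_n)\colon I^n\to\RR^n$, so that $f^{-1}(0)=\bigcap_{i=1}^{n}M_i$. Suppose for contradiction that this intersection is empty, so $f$ maps $I^n$ into $\RR^n\setminus\{0\}$. Compare $f$ with the canonical homeomorphism $\ell(x)=(2x_1-1,\dots,2x_n-1)$ through the straight-line homotopy $H(x,t)=(1-t)\ell(x)+tf(x)$. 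For $x\in\partial I^n$ we have $x\in F_i^{\pm}$ for some $i$, and there the $i$-th coordinates of both $\ell$ and $f$ equal $\pm1$, so $H_i(x,t)=\pm1\neq0$; hence $H$ avoids the origin on $\partial I^n\times[0,1]$. Thus $f|_{\partial I^n}$ is homotopic in $\RR^n\setminus\{0\}$ to $\ell|_{\partial I^n}$. But $f|_{\partial I^n}$ is null-homotopic in $\RR^n\setminus\{0\}$, since it extends over the contractible cube $I^n$ without hitting $0$, whereas $\ell|_{\partial I^n}$ represents a generator of $H_{n-1}(\RR^n\setminus\{0\})\cong\ZZ$; this contradiction proves the cube theorem.

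\noindent The main obstacle is exactly this last step: it is a restatement of Brouwer's fixed-point theorem (equivalently, the non-contractibility of $S^{n-1}$), and it is the only place where genuine topology enters. The partition-extension fact and the nested bookkeeping in the first two paragraphs are routine point-set topology. Accordingly, I would either quote Brouwer's theorem directly or include the short degree computation above; the base case $n=1$ is just the connectedness of $[0,1]$.
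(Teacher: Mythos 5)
The paper does not prove this lemma at all --- it is quoted verbatim from Engelking (Lemma 1.8.14) as a known result. Your argument is correct and is essentially the standard textbook proof of that result: reduce to the Eilenberg--Otto theorem that $n$ partitions of $I^n$ between opposite faces have a common point (via the partition-extension lemma, Engelking's Lemma 1.2.9), and prove that theorem by the homotopy/degree comparison with the identity-like map $\ell$, so nothing further is needed.
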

\begin{lem}\rm
\label{partition2}
Let $L_0\doteq [0,B]^n$ for some $B>0$, $F_i^{+}$, $F_i^{-}$, where $i=1,2,\cdots,n$, be the pairs of opposite faces of $L_0$ and let $0<\epsilon<\frac{1}{6}B$.
For $k=1,2,\cdots, n$, let
$\mathcal{U}_k$ be an $\epsilon$-disjoint and $\frac{1}{3}B$-bounded family of subsets of $[0,B]^n$. Then there exists an $\epsilon$-partition $L_{k+1}$ of $L_{k}$ between $F_{k+1}^+\cap L_{k}$ and $F_{k+1}^-\cap L_{k}$ such that $L_{k+1}\subseteq L_{k}\cap(\bigcup \mathcal{U}_{k+1})^c$ for $k=0,1,2,\cdots,n-1$. Moreover, $L_{n}\neq \varnothing$.
\end{lem}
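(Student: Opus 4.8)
The plan is to manufacture the whole decreasing chain $L_0\supset L_1\supset\cdots\supset L_n$ by iterating a single "avoiding partition" step and then to read off $L_n\neq\varnothing$ directly from Lemma \ref{partition}. Indeed, once each $L_{k+1}$ is an $\epsilon$-partition of $L_k$ between $F_{k+1}^+\cap L_k$ and $F_{k+1}^-\cap L_k$, it is in particular a partition between these two sets, so after the obvious affine rescaling of $[0,B]^n$ onto $[0,1]^n$ (which carries faces to faces and partitions to partitions) the hypotheses of Lemma \ref{partition} are satisfied, and that lemma yields $L_n\neq\varnothing$. Hence all the content lies in the inductive step: given a closed set $L_k\subseteq[0,B]^n$ and an $\epsilon$-disjoint, $\tfrac13 B$-bounded family $\mathcal{U}_{k+1}$, I must produce an $\epsilon$-partition of $L_k$ between the two opposite faces in the $(k+1)$-st direction that misses $\bigcup\mathcal{U}_{k+1}$.

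For this step I would realize the partition as the zero set of an explicit continuous function. Writing $x_{k+1}$ for the $(k+1)$-st coordinate, so that $F_{k+1}^-=\{x_{k+1}=0\}$ and $F_{k+1}^+=\{x_{k+1}=B\}$, I start from the flat function $x\mapsto x_{k+1}-\tfrac12 B$, whose zero set is the middle slice. Two geometric facts drive the construction: first, every $U\in\mathcal{U}_{k+1}$ has $\diam U\le\tfrac13 B$, so any $U$ meeting the middle slice $\{x_{k+1}=\tfrac12 B\}$ is confined to the coordinate slab $x_{k+1}\in[\tfrac16 B,\tfrac56 B]$ and reaches neither face; second, distinct members are at distance $\ge\epsilon$, so for small $\rho>0$ the neighborhoods $N_\rho(U)$ are pairwise disjoint. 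I then add to the flat function, on each member $U$ meeting the middle slice, a continuous bump supported in $N_\rho(U)$ of size exceeding $\tfrac13 B$, pushing $U$ strictly to one side. Since the supports are disjoint the sum $g$ is continuous, and $g\neq0$ on every member of $\mathcal{U}_{k+1}$: on the members meeting the slice by the push, and on the remaining members because there no bump reaches them (they lie at distance $\ge\epsilon>\rho$ from the pushed members), so $g=x_{k+1}-\tfrac12 B\neq0$ on them. Setting $L_{k+1}=g^{-1}(0)\cap L_k$, together with the open sets $\{g<0\}$ and $\{g>0\}$, then exhibits a closed partition of $L_k$ between the two faces that avoids $\bigcup\mathcal{U}_{k+1}$.

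It remains to check the distance estimates $d(L_{k+1},F_{k+1}^\pm\cap L_k)>\epsilon$, and this is the step I expect to be the main obstacle, since it is exactly where the hypothesis $\epsilon<\tfrac16 B$ is consumed. The members meeting the middle slice, together with their bump annuli, must be kept inside the open slab $\epsilon<x_{k+1}<B-\epsilon$; those members already lie in $[\tfrac16 B,\tfrac56 B]$ and $\epsilon<\tfrac16 B$, so there is genuine room, and the choice $\rho=\min\{\tfrac{\epsilon}{2},\tfrac12(\tfrac16 B-\epsilon)\}>0$ simultaneously keeps the supports pairwise disjoint and keeps every bump annulus inside the slab. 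Consequently no bump meets the collars, which satisfy $\overline{N_\epsilon(F_{k+1}^-\cap L_k)}\subseteq\{x_{k+1}\le\epsilon\}$ and $\overline{N_\epsilon(F_{k+1}^+\cap L_k)}\subseteq\{x_{k+1}\ge B-\epsilon\}$; there $g$ agrees with $x_{k+1}-\tfrac12 B$ and has absolute value at least $\tfrac12 B-\epsilon>0$, so $L_{k+1}$ is disjoint from both closed collars, whence by compactness of $[0,B]^n$ the required strict inequalities $d(L_{k+1},F_{k+1}^\pm\cap L_k)>\epsilon$ hold. Feeding the chain $L_0\supset L_1\supset\cdots\supset L_n$ so obtained into Lemma \ref{partition} then gives $L_n\neq\varnothing$ and completes the proof.
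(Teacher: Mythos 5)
Your proof is correct, and it shares the paper's overall skeleton --- build the decreasing chain $L_0\supset L_1\supset\cdots\supset L_n$ one inductive step at a time and then invoke Lemma \ref{partition} (after the affine rescaling you mention) to conclude $L_n\neq\varnothing$ --- but your construction of the avoiding $\epsilon$-partition in each step is genuinely different from the paper's. The paper proceeds purely set-theoretically: it splits $\mathcal{U}_{k+1}$ into the members lying within $2\epsilon$ of $F_{k+1}^+$ and the rest, fattens each group by $\frac{\epsilon}{3}$, adjoins the $\frac{4}{3}\epsilon$-collar of the corresponding face, uses the diameter bound $\frac{1}{3}B$ together with $\epsilon<\frac{1}{6}B$ to check that the two resulting open sets are disjoint, and takes $L_{k+1}$ to be their complement in $L_k$; the clearance $d(L_{k+1},F_{k+1}^{\pm})\ge\frac{4}{3}\epsilon>\epsilon$ then comes for free, at the cost of an extra connectedness argument verifying that the faces still meet $L_k$ after the earlier deletions. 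You instead realize the partition as a level set $g^{-1}(0)\cap L_k$ of a perturbed coordinate function, sorting members by whether they meet the middle slice and pushing those that do off the zero level by disjointly supported bumps; in your version the diameter bound and $\epsilon<\frac{1}{6}B$ are consumed in confining the bumps to the slab $\epsilon<x_{k+1}<B-\epsilon$, which is what yields the face clearance. Both arguments are complete; yours is more analytic and arguably cleaner per step (it never needs to track the history of previous cuts), though you should note in passing that an $\epsilon$-disjoint family of nonempty subsets of the compact cube is finite, so the sum of bumps is trivially continuous, while the paper's version is more combinatorial and produces the two open halves explicitly without auxiliary functions.
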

\begin{proof}
For $k=1,2,\cdots, n$, let $\mathcal{A}_k\doteq \{U\in\mathcal{U}_k~|~d(U,F_k^+)\leq2\epsilon\}$ and  $\mathcal{B}_k\doteq \{U\in\mathcal{U}_k~|~ d(U,F_k^+)>2\epsilon\}$.
Note that $\mathcal{A}_k\cup\mathcal{B}_k=\mathcal{U}_k$.
Let
\[A_k=\bigcup\{N_{\frac{\epsilon}{3}}(U)~|~U\in\mathcal{A}_k\}\text{ and }B_k=\bigcup\{N_{\frac{\epsilon}{3}}(U)~|~U\in\mathcal{B}_k\}.\]
Then $d(A_k,F_k^-)>B-\frac{1}{3}B-2\epsilon-\frac{\epsilon}{3}>\frac{4}{3}\epsilon$ and $d(B_k,F_k^+)>2\epsilon-\frac{\epsilon}{3}>\frac{4}{3}\epsilon$. It follows that
\[(A_k\cup N_{\frac{4}{3}\epsilon}(F_k^+))\cap (B_k\cup N_{\frac{4}{3}\epsilon}(F_k^-))=\varnothing.\]
Let $L_{k+1}\doteq L_{k}\setminus ((A_{k+1}\cup N_{\frac{4}{3}\epsilon}(F_{k+1}^+))\cup (B_{k+1}\cup N_{\frac{4}{3}\epsilon}(F_{k+1}^-)))$ for $k=0,1,2,\cdots,n-1$.

Note that  $F_{k+1}^+\cap L_{k}\neq\varnothing$ and $F_{k+1}^-\cap L_{k}\neq\varnothing$ for $k=0,1,\cdots,n-1.$
Indeed,
\begin{itemize}
\item Clearly, $F_1^+\cap L_{0}\neq\varnothing$ and $F_1^-\cap L_{0}\neq\varnothing$.
\item For $k=1,2,\cdots, n-1$,
\[
\begin{split}
L_{k}&=L_{k-1}\setminus ((A_k\cup N_{\frac{4}{3}\epsilon}(F_k^+))\cup (B_k\cup N_{\frac{4}{3}\epsilon}(F_k^-)))\\
&=L_{k-2}\setminus ((A_{k-1}\cup N_{\frac{4}{3}\epsilon}(F_{k-1}^+))\cup (B_{k-1}\cup N_{\frac{4}{3}\epsilon}(F_{k-1}^-))\cup(A_k\cup N_{\frac{4}{3}\epsilon}(F_k^+))\cup (B_k\cup N_{\frac{4}{3}\epsilon}(F_k^-)))\\
&=\cdots\\
&=L_{0}\setminus ((A_{1}\cup N_{\frac{4}{3}\epsilon}(F_{1}^+))\cup (B_{1}\cup N_{\frac{4}{3}\epsilon}(F_{1}^-))\cup\cdots\cup(A_k\cup N_{\frac{4}{3}\epsilon}(F_k^+))\cup (B_k\cup N_{\frac{4}{3}\epsilon}(F_k^-))).
\end{split}
\]
Note that
$F_{k+1}^+\setminus(N_{\frac{4}{3}\epsilon}(F_1^+))\cup N_{\frac{4}{3}\epsilon}(F_1^-)\cup\cdots\cup N_{\frac{4}{3}\epsilon}(F_k^+)\cup N_{\frac{4}{3}\epsilon}(F_k^-))$ is a nonempty connected set with diameter greater than $\frac{5}{9}B$ and
$(A_1\cup B_1\cup\cdots A_k\cup B_k)$ is a union set of a $\frac{\epsilon}{3}$-disjoint and $\frac{4}{9}B$-bounded family. Then
 \[
 F_{k+1}^+\setminus((A_{1}\cup N_{\frac{4}{3}\epsilon}(F_{1}^+))\cup (B_{1}\cup N_{\frac{4}{3}\epsilon}(F_{1}^-))\cup\cdots\cup(A_k\cup N_{\frac{4}{3}\epsilon}(F_k^+))\cup (B_k\cup N_{\frac{4}{3}\epsilon}(F_k^-)))\neq\varnothing.
 \]
It follows that $F_{k+1}^+\cap L_{k}\neq\varnothing $. Similarly, $F_{k+1}^-\cap L_{k}\neq\varnothing$.
\end{itemize}
Therefore, $L_{k+1}$ is an $\epsilon$-partition of $L_{k}$ between $F_{k+1}^+\cap L_{k}$ and $F_{k+1}^-\cap L_{k}$ such that $L_{k+1}\subset L_{k}\cap(\bigcup \mathcal{U}_{k+1})^c$.
By Lemma~\ref{partition}, $L_{n}\neq \varnothing$.

\end{proof}

Let $L=\{n+2~|~n\in\mathbb{N}\}$ and $S_{\xi}[L]=\Big\{\{k_{0}+2,k_{1}+2,\cdots,k_{s}+2\}~\Big|~\{k_{0},k_{1},\cdots,k_{s}\}\in S_{\xi}\Big\}$.
Then by Corollary \ref{coroord}, Ord$S_{\xi}[L]$=Ord$S_{\xi}=\xi$ for each countable ordinal number $\xi$.

\begin{defi}\rm
For $\tau=\{k_0+2,k_1+2,\cdots,k_m+2\}\in S_{\xi}[L]$, we define \[
\text{$X_{\tau}=\Big\{(x_i)_{i=0}^m\in (2^{k_0}\mathbb{Z})^{m+1}~\Big|~|\{j~|~x_j\notin 2^{k_p}\mathbb{Z}\}|\leq p \text{ for every } p\in\{0,1,\cdots, m\}\Big\}$.}\]
We consider $X_\tau$ with sup-metric.
\end{defi}

\begin{prop}\rm
\label{prop1}
$\tau\in A_{2}(X_{\tau},d)$ for every $\tau=\{k_0+2,k_1+2,\cdots,k_m+2\}\in S_{\xi}[L]$.
\end{prop}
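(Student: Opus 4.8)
The plan is to argue by contradiction. Suppose, contrary to $\tau\in A_2(X_\tau,d)$, that there are uniformly bounded families $\mathcal V_{k_0+2},\dots,\mathcal V_{k_m+2}$, say all $R$-bounded, with each $\mathcal V_{k_p+2}$ being $2^{k_p+2}$-disjoint, whose union covers $X_\tau$. Writing $\mathcal V_p:=\mathcal V_{k_p+2}$, I must exhibit a single point of $X_\tau$ lying in no member of any $\mathcal V_p$. The first reduction I would make is to pass to the product sublattice $Y=\prod_{p=0}^{m}2^{k_p}\ZZ$. A direct check of the defining condition of $X_\tau$ shows $Y\subseteq X_\tau$: if $x_p\in 2^{k_p}\ZZ$ then $x_p\in 2^{k_q}\ZZ$ for every $q\le p$, so for each level $p$ only the coordinates $0,\dots,p-1$ can fail to lie in $2^{k_p}\ZZ$. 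Hence it suffices to find an uncovered point of $Y$, and we have replaced $X_\tau$ by a clean anisotropic grid whose spacing in direction $p$ is exactly $2^{k_p}$, a quarter of the disjointness scale of $\mathcal V_p$.

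The geometric engine is Lemma~\ref{partition2}. I would fix a large $B$ (a multiple of $2^{k_m}$ with $B>6\cdot 2^{k_m+2}$ and $B\ge 3R$) and work inside $[0,B]^{m+1}$, cutting one coordinate direction per family. Since $\mathcal V_p$ is $2^{k_p+2}$-disjoint it is $\epsilon_p$-disjoint for $\epsilon_p=2^{k_p+2}$, so the single-direction construction from the proof of Lemma~\ref{partition2}, applied in direction $p$ against $\mathcal V_p$, produces a decreasing chain $[0,B]^{m+1}=L_0\supseteq L_1\supseteq\cdots\supseteq L_{m+1}$ in which $L_{p+1}$ is an $\epsilon_p$-partition of $L_p$ between $F_{p+1}^{+}$ and $F_{p+1}^{-}$ with $L_{p+1}\subseteq L_p\cap(\bigcup N_{\epsilon_p/3}(\mathcal V_p))^c$. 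Lemma~\ref{partition} then forces $L_{m+1}\neq\varnothing$, and every point of $L_{m+1}$ is at distance greater than $\epsilon_p/3>2^{k_p}$ from every member of $\mathcal V_p$, for all $p$ at once.

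The main obstacle is the final step: Lemma~\ref{partition2} produces an arbitrary point of the cube, whereas I need a point of the grid $Y$. Rounding coordinate $p$ to $2^{k_p}\ZZ$ displaces a point by up to $2^{k_p}/2\le 2^{k_m}/2$ in the sup-metric, which swamps the protection $2^{k_0}$ coming from the finest family $\mathcal V_0$; so a single isotropic application cannot simultaneously land in $Y$ and stay uncovered. Resolving this scale-mismatch is the heart of the matter, and I would make the argument hierarchical rather than one-shot, running an induction on $m$ that processes the families from the coarsest scale $2^{k_m}$ downward. The coarsest family $\mathcal V_m$, being $2^{k_m+2}$-disjoint, must leave inside any band of four consecutive $2^{k_m}$-layers a gap of width $\ge 2^{k_m+2}$ in the remaining directions: two members whose separation in the $x_m$-direction is $<2^{k_m+2}$ are forced to be $2^{k_m+2}$-apart in the sup-metric of the other coordinates. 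On such a gap only $\mathcal V_0,\dots,\mathcal V_{m-1}$ act, which reduces the problem to a copy of $\prod_{p<m}2^{k_p}\ZZ$ and invokes the inductive hypothesis; the base case $m=0$ is the elementary fact that $2^{k_0}\ZZ$ cannot be covered by one $2^{k_0+2}$-disjoint uniformly bounded family. The step I expect to demand the most care is keeping this gap bookkeeping valid for arbitrarily large bounded $R$: when $R$ exceeds the coarse scales the purely one-dimensional gap count degenerates, and one must feed the surviving transverse region into Lemma~\ref{partition2} (and the topological content of Lemma~\ref{partition}) in full so that the $(m)$-dimensional obstruction, not a one-dimensional interval argument, drives the contradiction.
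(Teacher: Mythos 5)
You have the right global strategy (contradiction, a large cube, a nested chain of partitions built from Lemmas \ref{partition} and \ref{partition2}, one family killed per step), and you have correctly isolated the genuine difficulty: the nonempty set produced by a one-shot application of Lemma \ref{partition2} is a point of the cube, not of the lattice, and rounding it to the lattice moves it by up to $2^{k_m}/2$, destroying the protection of order $2^{k_0}$ coming from the finest family. But the proposal does not overcome this difficulty. The hierarchical ``gap'' induction you sketch rests on the claim that, inside a thin band of $2^{k_m}$-layers, the complement of $\bigcup\mathcal V_m$ contains a copy of $\prod_{p<m}2^{k_p}\ZZ$ to which an inductive hypothesis applies; this is only plausible when the members of $\mathcal V_m$ have diameter comparable to $2^{k_m}$, and you concede that it degenerates once the uniform bound $R$ exceeds the coarse scales --- which is the general case, since $R$ is arbitrary. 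Saying that one must then ``feed the surviving transverse region into Lemma \ref{partition2} in full'' restates the obstacle rather than removing it: after deleting $\bigcup\mathcal V_m$ the surviving set is not a product grid, so there is no lower-dimensional lattice for the induction to act on. As written, the proof does not close.

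The missing idea is the discretization of the partition itself. At step $j$ the paper first thickens the family, replacing $\mathcal U_{m+1-j}$ by $\overline{N_{2^{k_{m+1-j}}}(\mathcal U_{m+1-j})}$ (still $2^{k_{m+1-j}+1}$-disjoint), applies Lemma \ref{partition2} to get a partition $L_j$ missing the thickened family, and then replaces $L_j$ by $L_j'=\partial_{m+1-j}M_j$, the union of the $(m+1-j)$-dimensional skeletons of the cells of the $2^{k_{m+1-j}}$-grid that meet $L_j$. This $L_j'$ is still a partition between the same pair of faces (the deleted open cells cannot connect the two sides), it lies on grid hyperplanes, and it still misses $\bigcup\mathcal U_{m+1-j}$ because it is within $2^{k_{m+1-j}}$ of $L_j$ and the thickening was by exactly that amount. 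Processing the families from the coarsest scale $2^{k_m}$ down to $2^{k_0}$, the rounding error at each step is absorbed by the thickening of the \emph{one} family handled at that scale, never by the finer ones; Lemma \ref{partition} gives $L'_{m+1}\neq\varnothing$, and the final points lie in $X_\tau$ by construction, since one coordinate is forced into $2^{k_m}\ZZ$, two into $2^{k_{m-1}}\ZZ$, and so on, which is precisely the membership condition for $X_\tau$. Incidentally, your preliminary reduction to $Y=\prod_p2^{k_p}\ZZ$ is correct but aims at a smaller (hence harder) target than necessary; the skeleton construction lands in $X_\tau$ directly, with no control needed over which coordinate is fixed at which scale beyond what the grid refinement already provides.
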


\begin{proof}
Suppose that $\tau=\{k_0+2,k_1+2,\cdots,k_m+2\}\notin A_{2}(X_{\tau},d)$, then there are $B$-bounded families $\mathcal{U}_0,\mathcal{U}_1,\cdots,\mathcal{U}_m$ such that $\mathcal{U}_j$ is $2^{k_j+2}$-disjoint for every $j\in\{0,1,\cdots, m\}$ and $\bigcup_{i=0}^m\mathcal{U}_i$ covers $[0,8B]^{m+1}\cap X_{\tau}$ for some $B>2^{k_m+1}$.

Assume that $p_{1}=\frac{8B}{2^{k_m}}\in \mathbb{N}$.
Take a bijection $\psi:\{1,2,\cdots,p_{1}^{m+1}\} \to \{0,1,2,\cdots,p_{1}-1\}^{m+1}$. Let
$$Q(t)=\prod_{j=1}^{m+1}[2^{k_m}\psi(t)_{j},2^{k_m}(\psi(t)_{j}+1)],\text{where }\psi(t)_{j}\text{ is the $j$th coordinate of }\psi(t).$$
Let $\mathcal{Q}_1=\{Q(t)~|~t\in\{1,2,\cdots,p_{1}^{m+1}\}\}$, then $[0,8B]^{m+1}=\bigcup_{Q\in\mathcal{Q}_1}Q$. Note that
 \[\text{$[0,8B]^{m+1}\cap X_{\tau}\subseteq\bigcup_{Q\in \mathcal{Q}_1}\partial_{m}Q$, where $\partial_{m}Q$ is the $m$-dimensional skeleton of $Q$.}\]

Let $L_{0}=[0,8B]^{m+1}$.
Since $\overline{N_{2^{k_m}}(\mathcal{U}_m)}$ is $2^{k_m+1}$-disjoint and $(2^{k_m+1}+B)$-bounded, by Lemma~\ref{partition2}, there exists a $2^{k_m+1}$-partition $L_1$ of $[0,8B]^{m+1}$ between $F_1^+$ and $F_1^-$ such that
 \[
 \text{$L_1\subseteq (\bigcup \overline{N_{2^{k_m}}(\mathcal{U}_m)})^c\cap [0,8B]^{m+1}$ and $d(L_1,F_1^{+/-})>2^{k_m+1}$,}
  \]
where $F_1^{+/-}$ is a pair of opposite facets of $[0,8B]^{m+1}$.
Since $L_1$ is a partition of $[0,8B]^{m+1}$ between $F_1^+$ and $F_1^-$, $[0,8B]^{m+1}=L_1\sqcup A_1 \sqcup B_1$ such that $A_1$, $B_1$ are open in $[0,8B]^{m+1}$ and $A_1$, $B_1$ contain two opposite facets $F_1^-$, $F_1^+$ respectively.\

Let $\mathcal{M}_1=\{Q\in \mathcal{Q}_1~|~Q\cap L_1\neq \varnothing\}$ and $M_1=\bigcup \mathcal{M}_1$. Since $L_1$ is a $2^{k_m+1}$-partition of $[0,8B]^{m+1}$ between $F_1^+$ and $F_1^-$, then  $M_1$ is a partition of $[0,8B]^{m+1}$ between $F_1^+$ and $F_1^-$, i.e., $[0,8B]^{m+1}=M_1\sqcup A'_1 \sqcup B'_1$ such that $A'_1$, $B'_1$ are open in $[0,8B]^{m+1}$ and $A'_1$, $B'_1$ contain two opposite facets $F_1^-$, $F_1^+$ respectively.
Let $L'_1=\partial_{m}M_1=\bigcup\{\partial_{m}Q|Q\in \mathcal{M}_1\}$, then $[0,8B]^{m+1}\setminus (L'_1\sqcup A'_1 \sqcup B'_1)$ is the union of some disjoint open $m+1$-dimensional cubes with length of edge $= 2^{k_m}$.
So $L_{1}'$ is a partition of $[0,8B]^{m+1}$  between $F_1^+$ and $F_1^-$ and $L'_1\subseteq (\bigcup \mathcal{U}_m)^c\cap [0,8B]^{m+1}$.\

Since $\overline{N_{2^{k_{m-1}}}(\mathcal{U}_{m-1})}$ is $2^{k_{m-1}+1}$-disjoint and $(2^{k_{m-1}+1}+B)$-bounded, there exists a $2^{k_{m-1}+1}$-partition $L_{2}$ of $L'_{1}$ between $F_{2}^+\cap L'_1$ and $F_{2}^-\cap L'_1$ such that
\[
\text{ $L_{2}\subseteq (\bigcup\overline{ N_{2^{k_{m-1}}}(\mathcal{U}_{m-1})})^c\cap L'_{1}$ and $d(L_{2},F_{2}^{+/-})>2^{k_{m-1}+1}$.}
\]
Since $L_{2}$ is a partition of $L'_{1}$ between $F_{2}^+\cap L'_1$ and $F_{2}^-\cap L'_1$, $L'_1=L_{2}\sqcup A_{2} \sqcup B_{2}$ such that $A_{2}$, $B_{2}$ are open in $L'_1$ and $A_{2}$, $B_{2}$ contain two opposite facets $F_{2}^-\cap L'_1$, $F_{2}^+\cap L'_1$ respectively.\

Assume that $p_{2}=\frac{8B}{2^{k_{m-1}}}\in \mathbb{N}$.
Take a bijection $\psi:\{1,2,\cdots,p_{2}^{m+1}\} \to \{0,1,2,\cdots,p_{2}-1\}^{m+1}$. Let
$$Q'(t,l)=\prod_{j=1}^l(2^{k_{m-1}}\psi(t)_j,2^{k_{m-1}}(\psi(t)_j+1))\times 2^{k_{m-1}}\psi(t)_l\times \prod_{j=l+1}^{m+1}(2^{k_{m-1}}\psi(t)_j,2^{k_{m-1}}(\psi(t)_j+1)),$$
where $\psi(t)_j$ is the $j$-th coordinate of $\psi(t)$.\
Let \[\mathcal{Q}_2=\{\overline{Q'(t,l)\cap L_1'}~~|~~t\in \{1,2,\cdots,p_{2}^{m+1}\},l\in\{1,\cdots,m+1\}\}\] be a family of $m$-dimensional cubes with length of edges=$2^{k_{m-1}}$, then $L'_{1}=\bigcup_{Q\in\mathcal{Q}_2}Q$.
Let\[\text{ $\mathcal{M}_{2}=\{Q'\in \mathcal{Q}_{2}~|~Q'\cap L_{2}\neq \varnothing\}$ and $M_{2}=\bigcup \mathcal{M}_{2}$. }\]Since $L_{2}$ is a $2^{k_{m-1}+1}$-partition of $L'_1$ between $F_{2}^+\cap L'_1$ and $F_{2}^-\cap L'_1$, then  $M_{2}$ is a partition of $L'_1$ between $F_{2}^+\cap L'_1$ and $F_{2}^-\cap L'_1$, i.e., $L'_1=M_{2}\sqcup A'_{2} \sqcup B'_{2}$ such that $A'_{2}$, $B'_{2}$ are open in $L'_1$ and $A'_{2}$, $B'_{2}$ contain two opposite facets $F_{2}^-\cap L'_1$, $F_{2}^+\cap L'_1$ respectively.
Let $L'_{2}=\partial_{m-1}M_{2}=\bigcup\{\partial_{m-1}Q'~|~Q'\in \mathcal{M}_{2}\}$. Then $L'_{1}\setminus (L'_{2}\sqcup A'_{2} \sqcup B'_{2})$ is the union of some disjoint open $m$-dimensional cubes with length of edge $= 2^{k_{m-1}}$.
So $L'_{2}$ is a partition of $L'_{1}$  between $F_{2}^+\cap L'_1$ and $F_{2}^-\cap L'_1$ and $L'_{2}\subseteq (\bigcup (\mathcal{U}_{m-1}\cup \mathcal{U}_m))^c\cap [0,8B]^{m+1}$.

After $m+1$ steps, we obtain $L'_{m+1}$ to be a partition of $L'_{m}$  between $F_{m+1}^+\cap L'_{m}$ and $F_{m+1}^-\cap L'_{m}$ and $L'_{m+1}\subseteq (\bigcup (\bigcup_{i=0}^m \mathcal{U}_i))^c\cap [0,8B]^{m+1}$.
Note that $L'_{m+1}$ is $0$-dimensional cubes with length of edge $= 2^{k_{0}}$.
By the construction, $L'_{m+1}\subseteq X_{\tau}$. Since $\bigcup_{i=0}^m\mathcal{U}_i$ covers $[0,8B]^{m+1}\cap X_{\tau}$, $L'_{m+1}=\varnothing$ which is a contradiction with Lemma \ref{partition2}.

\end{proof}

\begin{defi}
Let \[\text{$\bigoplus \mathbb{Z}=\{(x_i)~|~x_i\in\ZZ\text{ and there exists }k\in \mathbb{N} \text{ such that } x_j=0 \text{ for each }j\geq k\}$}\] with the sup-metric $\rho$.
For every $\tau=\{k_0+2,k_1+2,\cdots,k_m+2\}\in S_{\xi}[L]$, we define an isometric embedding $i_{\tau}: X_{\tau}\to \bigoplus \mathbb{Z}$ by
\[
i_{\tau}(x)_i=x_i~~\forall~ i\in \{0,1,\cdots,m\} \text{ and }i_{\tau}(x)_i=0~~\forall~ i\notin \{0,1,\cdots,m\},
\]
where $i_{\tau}(x)_i$ is the $i$-th coordinate of $i_{\tau}(x)$ and $x=(x_{0},x_{1},\cdots,x_{m})\in X_{\tau}$.

For any countable ordinal number $\xi$, we define $X_{\xi}$ as the disjoint union of $X_{\tau}$ with $\tau\in S_{\xi}[L]$, i.e.,
\[
X_{\xi}=\bigsqcup_{\tau\in S_{\xi}[L]}X_{\tau}
\]
with the metric $d_{\xi}$ which is defined as
\begin{eqnarray*}d_{\xi}(x,y)=
\begin{cases}
\rho(i_{\tau}(x),i_{\tau}(y)) & \text{ if }x,y\in X_{\tau},\cr
\max\{s(\tau_{1}),s(\tau_{2}),\rho(i_{\tau_{1}}(x),i_{\tau_{2}}(y))\} & \text{ if }x\in X_{\tau_{1}},y\in X_{\tau_{2}} \text{ and } \tau_{1}\neq\tau_{2},
\end{cases}
\end{eqnarray*}
where $s(\sigma)=2^{\max \sigma}$ for any $\sigma\in S_{\xi}[L]$.
\end{defi}

\begin{thm}\rm
\label{thmmain0}
 trasdim$(X_\xi)\ge\xi$.
\end{thm}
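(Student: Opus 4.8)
The plan is to establish the containment $S_\xi[L] \subseteq A_2(X_\xi, d_\xi)$, after which the theorem is essentially immediate. By Lemma \ref{lemordequal}, $\text{trasdim}(X_\xi) = \text{Ord}\,A(X_\xi, d_\xi) = \text{Ord}\,A_2(X_\xi, d_\xi)$. Granting the containment, Lemma \ref{lemordsubset} gives $\text{Ord}\,A_2(X_\xi, d_\xi) \geq \text{Ord}\,S_\xi[L]$, and since $\text{Ord}\,S_\xi[L] = \text{Ord}\,S_\xi = \xi$ by Corollary \ref{coroord} and Proposition \ref{propord}, the desired inequality $\text{trasdim}(X_\xi) \geq \xi$ follows.

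Thus the real work is to show $\tau \in A_2(X_\xi, d_\xi)$ for every $\tau \in S_\xi[L]$, and the single ingredient driving this is Proposition \ref{prop1}, which already guarantees $\tau \in A_2(X_\tau, d)$. First I would record the metric observation that makes the blocks transfer correctly: the inclusion $X_\tau \hookrightarrow X_\xi$ is an isometric embedding, since the definition of $d_\xi$ gives $d_\xi(x,y) = \rho(i_{\tau}(x), i_{\tau}(y)) = d(x,y)$ for all $x, y \in X_\tau$. Hence each block $X_\tau$ sits inside $(X_\xi, d_\xi)$ carrying exactly its original sup-metric.

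The argument then proceeds by contradiction. Suppose $\tau \notin A_2(X_\xi, d_\xi)$; then there are uniformly bounded families $\mathcal{V}_i$ (for $i \in \tau$), each $2^i$-disjoint in $(X_\xi, d_\xi)$, whose union covers $X_\xi$. I would restrict them to the block by setting $\mathcal{V}_i' = \{V \cap X_\tau \mid V \in \mathcal{V}_i\}$ and check that these witness $\tau \notin A_2(X_\tau, d)$: they cover $X_\tau$ because $\bigcup_{i \in \tau} \mathcal{V}_i \supseteq X_\tau$; they stay uniformly bounded because $\diam_d(V \cap X_\tau) \leq \diam_{d_\xi}(V)$ by the isometry; and for distinct $V, V' \in \mathcal{V}_i$ one has $d(V \cap X_\tau, V' \cap X_\tau) \geq d_\xi(V, V') \geq 2^i$, since taking the infimum over the smaller sets $V \cap X_\tau \subseteq V$ and $V' \cap X_\tau \subseteq V'$ can only increase the set-distance. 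This contradicts Proposition \ref{prop1}, so $\tau \in A_2(X_\xi, d_\xi)$, completing the containment.

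The main (though modest) obstacle is precisely this transfer step: ensuring that cutting a cover of the entire space $X_\xi$ down to one block preserves both uniform boundedness and $2^i$-disjointness. It hinges on two small facts — the isometric embedding of $X_\tau$ and the monotonicity of the set-distance under passing to subsets — and once these are in place, the rest is bookkeeping with $\text{Ord}$ and the already-computed value $\text{Ord}\,S_\xi[L] = \xi$.
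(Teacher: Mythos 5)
Your proposal is correct and follows the same route as the paper: invoke Proposition \ref{prop1} to get $\tau\in A_{2}(X_{\tau},d)$, deduce $S_{\xi}[L]\subseteq A_{2}(X_{\xi},d_{\xi})$, and conclude via $\mathrm{Ord}\,S_{\xi}[L]=\xi$ and Lemmas \ref{lemordsubset} and \ref{lemordequal}. The only difference is that you spell out the monotonicity $A_{2}(X_{\tau},d)\subseteq A_{2}(X_{\xi},d_{\xi})$ (restriction of covers to the isometrically embedded block), which the paper asserts without proof.
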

\begin{proof}
By Proposition \ref{prop1}, for every $\tau\in S_{\xi}[L]$, $\tau\in A_{2}(X_{\tau},d)\subseteq A_{2}(X_{\xi},d)$.
i.e., $S_{\xi}[L]\subseteq A_{2}(X_{\xi},d).$  So
\[
\text{trasdim}X_\xi=\text{Ord}A(X_{\xi},d)=\text{Ord}A_{2}(X_{\xi},d)\geq\text{Ord}S_{\xi}[L]=\xi.
\]
\end{proof}

\begin{thm}\rm\label{thmmain1} coasdim$(X_\xi)\le\xi$ for each countable ordinal number $\xi$.
\end{thm}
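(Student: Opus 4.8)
The plan is to argue by transfinite induction on $\xi$, checking the defining inequality of coasdim one scale at a time. Write $\xi=\gamma(\xi)+n(\xi)$. For the base case $\xi=n$ finite, every $\tau\in S_n[L]$ has $|\tau|\le n$, so each $X_\tau$ is (a scaled copy of) a subset of $\mathbb{Z}^{\le n}$ and has asdim $\le n$; since distinct components are $2^{\max\tau}$-apart in $d_\xi$, at every scale all but finitely many components are mutually separated, and a routine combination with Lemma~\ref{asdimunion} and Remark~\ref{remarkCOASDIMandASDIM} yields asdim$(X_n)\le n$, hence coasdim$(X_n)\le n$.

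For the inductive step fix $r>0$ and let $c$ be least with $2^{c}\ge r$. First I would split the index set into the finite set $\mathcal{F}_r=\{\tau : 2^{\max\tau}<r\}$ (finite because these $\tau$ are subsets of a fixed finite initial segment of $L$) and its complement. Every component $X_\tau$ with $\tau\notin\mathcal{F}_r$ is $r$-separated from all other components, since $d_\xi$ charges at least $\max\{s(\tau_1),s(\tau_2)\}$ across components. The key bookkeeping device is to discard $Y_r:=\bigcup_{\tau\in\mathcal{F}_r}X_\tau$ straight into the remainder: it is a finite union of finite-asdim pieces, so by Lemma~\ref{asdimunion} coasdim$(Y_r)\le c<\omega\le\gamma(\xi)$. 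On every remaining component I would perform the actual peeling. Working in the sup-metric (so cubical pieces have diameter comparable to $r$ regardless of $\tau$, which secures uniform boundedness), I construct families $\mathcal{U}_0^{\tau},\dots,\mathcal{U}_{n(\xi)}^{\tau}$ that remove the $n(\xi)+1$ finest lattice levels of $X_\tau$ lying below $r$; coordinates whose level is $\ge r$ are already $r$-separated and cost nothing. Because the defect condition forces at most $p$ coordinates to be finer than $2^{k_p}$, removing the finest surviving level is a one-family operation, and after $n(\xi)+1$ steps the leftover of $X_\tau$ is controlled by $K(\tau,n(\xi))$. Setting $\mathcal{U}_i=\bigcup_{\tau\notin\mathcal{F}_r}\mathcal{U}_i^{\tau}$ produces families that are $r$-disjoint (by the component separation) and uniformly bounded, leaving a remainder consisting of $Y_r$ together with the leftovers, which are copies of the spaces $X_{K(\tau,n(\xi))}$.

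The decisive point is a bound on the recursion parameter. A leftover survives only when the $(n(\xi)+1)$-st level already lies below scale, so $i(\tau,n(\xi))=k_{n(\xi)}<c$; hence by the definition of $S_\xi$ (cf.\ Lemma~\ref{lemsmall}) each surviving $K(\tau,n(\xi))$ lies in $S_{\zeta_l(\gamma(\xi))+l}[L]$ for some $l<c$. Grouping the leftovers according to $l$ gives a \emph{finite} union over $l=1,\dots,c-1$; the $l$-th group should be identified with (a subspace of) $X_{\zeta_l(\gamma(\xi))+l}$ and therefore, by the inductive hypothesis, has coasdim $\le\zeta_l(\gamma(\xi))+l$. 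Applying Lemma~\ref{asdimunion} to $Y_r$ and these finitely many groups bounds the total remainder by $\zeta_{c-1}(\gamma(\xi))+(c-1)$, which is strictly below $\gamma(\xi)$ since $\{\zeta_i(\gamma(\xi))+i\}$ is strictly increasing with supremum the limit ordinal $\gamma(\xi)$. This is exactly the requirement that the remainder have coasdim $<\gamma(\xi)$, and since $n(\xi)+1$ families were used, we obtain coasdim$(X_\xi)\le\gamma(\xi)+n(\xi)=\xi$.

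The hard part will be the peeling construction and the identification of the leftovers with a lower space. Concretely, I expect two obstacles: first, converting the partition technology of Lemma~\ref{partition2} into a \emph{positive} covering that removes exactly one lattice level per family while respecting the coupling between coordinates imposed by the defect constraints, keeping all members uniformly bounded across every component; and second, showing that the $l$-th group of leftovers — an $r$-separated union of possibly infinitely many copies of the various $X_{K(\tau,n(\xi))}$ — does not exceed the inductive bound coasdim $\le\zeta_l(\gamma(\xi))+l$. The second obstacle needs a replication lemma asserting that separated copies of uniformly coasdim-bounded spaces do not raise coasdim, which is what lets each group be treated as a subspace of $X_{\zeta_l(\gamma(\xi))+l}$ and the induction closed.
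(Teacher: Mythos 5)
Your skeleton matches the paper's: split the components of $X_\xi$ according to whether $i(\tau,n(\xi))$ is above or below the scale, cover the coarse components with $n(\xi)+1$ uniformly bounded $r$-disjoint families (the paper does this by noting $X_\tau\subseteq\{x\mid |\{j : x_j\notin 2^k\ZZ\}|\le n(\xi)\}$ and citing Lemma 3.8 of \cite{omega+k}, plus the $2^p$-separation between components), and then show the remainder has coasdim $<\gamma(\xi)$ using Lemma \ref{lemsmall} and the inductive hypothesis. But your treatment of the remainder contains two genuine gaps. First, the ``peeling'' of a fine component ($k_{n(\xi)}<c$) down to a copy of $X_{K(\tau,n(\xi))}$ using only $n(\xi)+1$ families is not justified and is dubious as stated: ``removing the finest surviving level is a one-family operation'' is false (the set of points with one coordinate off the $2^{k_1}$-lattice is a union of hyperplane-like pieces that no single $r$-disjoint uniformly bounded family can cover), and when $k_{n(\xi)+1}\ge c$ the points with $n(\xi)+1$ fine coordinates already defeat $n(\xi)+1$ families, so whatever is left after $n(\xi)+1$ families has no clean identification with $X_{K(\tau,n(\xi))}$. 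Second, even granting the peeling, each of your $l$-indexed groups is an \emph{infinite} $r$-separated union of copies of various spaces, and the ``replication lemma'' you invoke is precisely the nontrivial point: Lemma \ref{asdimunion} handles only finitely many subsets, and an infinite version for coasdim requires uniformity of all the bounds across the copies, which you do not supply.

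The paper's device, which you are missing, avoids both problems: it does not peel the fine components at all. Each such $X_\tau$ is placed wholesale into the remainder, and Lemma \ref{lemsmall} applied to the \emph{full} index $\tau$ (not to $K(\tau,n(\xi))$) gives $\tau\in S_{\zeta_l(\gamma(\xi))+l+n(\xi)+1}[L]$ for some $l\le p+1$; hence $X_\tau$ is isometrically a component of the lower-rank space $X_{\zeta_l(\gamma(\xi))+l+n(\xi)+1}$, and the entire remainder embeds isometrically into the \emph{finite} disjoint union $Z=\coprod_{l=1}^{p+1}X_{\zeta_l(\gamma(\xi))+l+n(\xi)+1}$ equipped with a compatible metric. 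Then only monotonicity of coasdim under subspaces and finitely many applications of Lemma \ref{asdimunion} are needed, and the inductive hypothesis bounds the remainder by $\zeta_{p+1}(\gamma(\xi))+p+n(\xi)+2<\gamma(\xi)$. Until you either prove your peeling-plus-replication lemmas or switch to this finite-embedding argument, the inductive step is not closed. (Your base case is fine, though the paper simply cites Theorem 25 of \cite{BD}.)
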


\begin{proof}
For finite $\xi$ it follows from Theorem 25 in \cite{BD}. We proceed by transfinite induction.
For $k\in\NN$ we put $T_k=\{\tau\in S_\xi[L]\mid |\tau|\geq n(\xi)+2 $ and $i(\tau,n(\xi))\ge k+2\}$ and $K_k=S_\xi[L]\setminus T_k$.\

Since $X_\tau\subset\{(x_1,\dots,x_{|\tau|})\in\mathbb{Z}^{|\tau|}\mid |\{j~|~x_j\notin 2^{k}\mathbb{Z}\}|\leq n(\xi)\}$ for each $\tau\in T_k$, by Lemma 3.8 in \cite{omega+k}, for any $r\in \NN$, there exists $p\ge2^r$ such that for each $\tau\in T_p$   there exist uniformly bounded $2^r$-disjoint  families $\U_{0},\dots,\U_{n(\xi)}$ which cover $X_\tau$. Since the distance between $X_\tau$ and $X_\sigma$ is greater or equal then $2^p$ for $\tau$, $\sigma\in T_p$, there exist uniformly bounded $2^r-$disjoint families which are still called $\U_{0},\dots,\U_{n(\xi)}$ such that $\bigcup_{i=0}^{n(\xi)}\U_i$ covers the set $Y=\coprod\{X_\tau\mid \tau\in T_p\}$.

So, we need to prove that   coasdim$(X_\xi\setminus Y)=$coasdim$(\coprod_{\tau\in K_p}X_\tau)<\gamma(\xi)$.  For each $\tau\in K_p$ we can consider  $X_\tau$ as a subset  of $X_{\zeta_l(\gamma(\xi))+l+n(\xi)+1}$ for some $l\in\{1,\dots,p+1\}$ by Lemma \ref{lemsmall} and Remark 3.2. Moreover,
we have that the natural embedding $e_\tau:X_\tau\to X_{\zeta_l(\gamma(\xi))+l+n(\xi)+1}$ is isometrical by the definition of the metrics on  $X_\tau$ and  on $X_{\zeta_l(\gamma(\xi))+l+n(\xi)+1}$. Let us define a metric space $(Z,d)$. Put  $Z=\coprod_{l\in\{1,\dots,p+1\}}X_{\zeta_l(\gamma(\xi))+l+n(\xi)+1}$. Define the metric $d$ as follows. On each  $X_{\zeta_l(\gamma(\xi))+l+n(\xi)+1}$  we consider its metric  $d_{\zeta_l(\gamma(\xi))+l+n(\xi)+1}$. If $x\in X_{\zeta_l(\gamma(\xi))+l+n(\xi)+1}$ and $y\in X_{\zeta_s(\gamma(\xi))+s+n(\xi)+1}$ with $s\ne l$, we have that $x\in X_\tau$ and $y\in X_\sigma$ for some $\tau\in S_{\zeta_l(\gamma(\xi))+l+n(\xi)+1}$ and $\sigma\in S_{\zeta_s(\gamma(\xi))+s+n(\xi)+1}$. Then we put $d(x,y)=\max\{s(\tau),s(\sigma),\rho(i_\tau(x),i_\sigma(y))\}$ where  $\rho$ is the sup-metric in $\bigoplus\ZZ$.
 By Lemma \ref{asdimunion} we have
 \[
 \text{coasdim}(\coprod_{l=1}^{p+1}X_{\zeta_l(\gamma(\xi))+l+n(\xi)+1})\leq\max_{l\in\{1,\dots,p+1\}}
 \zeta_{p+1}(\gamma(\xi))+l+n(\xi)+1\leq\zeta_{p+1}(\gamma(\xi))+p+n(\xi)+2<\gamma(\xi).
 \]

Define the map $e:\coprod_{\tau\in K_p}X_\tau\to Z$  by the conditions $e(x)=e_\tau(x)$ for each $x\in X_\tau$. We have that $e$ is an isometrical embedding.  Hence  coasdim$(\coprod_{\tau\in K_p}X_\tau)\le$coasdim$Z<\gamma(\xi)$.

\end{proof}

\begin{lem}\rm(\cite{omega+k})
\label{lemrelation}
Let $X$ be a metric space, if $X$ has complementary-finite asymptotic dimension, then trasdim$(X)\leq\text{coasdim}(X)$.

\end{lem}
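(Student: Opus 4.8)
The plan is to prove the slightly stronger statement that $\text{coasdim}(X)\le\gamma$ implies $\text{Ord}A(X,d)\le\gamma$ for every ordinal $\gamma$; since $\text{trasdim}(X)=\text{Ord}A(X,d)$ by definition, specializing to $\gamma=\text{coasdim}(X)$ then gives the lemma. I would argue by transfinite induction on $\gamma$.

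For the base case I would take $\gamma=n$ finite. Here $\text{coasdim}(X)\le n$ unwinds (the limit part being $0$) to the assertion that for every $r>0$ there are $r$-disjoint uniformly bounded families $\mathcal{U}_0,\dots,\mathcal{U}_n$ covering $X$, i.e. $\text{asdim}(X)\le n$ by Remark \ref{remarkCOASDIMandASDIM}. Given any $\sigma=\{i_0<\dots<i_s\}$ with $s\ge n$, I would apply this with $r=i_n$, assign the resulting family $\mathcal{U}_j$ to the index $i_j$ for $j\le n$ (each is $i_n$-disjoint, hence $i_j$-disjoint since $i_j\le i_n$) and the empty family to any remaining indices; this exhibits a cover witnessing $\sigma\notin A(X,d)$. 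Thus every member of $A(X,d)$ has at most $n$ elements, and $\text{Ord}A(X,d)\le n$ by Lemma \ref{lemord0}.

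For the inductive step I would write $\gamma=\lambda+n$ with $\lambda=\lambda(\gamma)$ a nonzero limit ordinal and $n=n(\gamma)$, and assume the statement for all smaller ordinals. The key tool is the elementary reformulation of the $\text{Ord}$ recursion: for a limit ordinal $\lambda$, one has $\text{Ord}M\le\lambda+n$ if and only if $\text{Ord}M^{\{a_0,\dots,a_n\}}<\lambda$ for all distinct $a_0,\dots,a_n\in\mathbb{N}$. This is obtained by iterating $\text{Ord}M\le\beta\Leftrightarrow\forall a\,\text{Ord}M^a<\beta$ a total of $n$ times, using the routine identity $(M^a)^b=M^{\{a,b\}}$ for $a\ne b$ and the fact that each $\lambda+k$ with $k\ge1$ is a successor, so that $\text{Ord}M^{\dots}<\lambda+k$ means $\text{Ord}M^{\dots}\le\lambda+k-1$. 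So I would fix distinct $a_0<\dots<a_n$, set $r=a_n$, and use $\text{coasdim}(X)\le\lambda+n$ to obtain $r$-disjoint uniformly bounded families $\mathcal{U}_0,\dots,\mathcal{U}_n$ whose complement $Y=X\setminus\bigcup\bigcup_{j=0}^{n}\mathcal{U}_j$ satisfies $\text{coasdim}(Y)<\lambda$.

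The heart of the argument, and the step I expect to be the main obstacle, is the inclusion $A(X,d)^{\{a_0,\dots,a_n\}}\subseteq A(Y,d)$. To prove it I would take $\tau$ in the left-hand side, so that $\{a_0,\dots,a_n\}\cup\tau\in A(X,d)$ with $\tau$ disjoint from $\{a_0,\dots,a_n\}$, and suppose toward a contradiction that $\tau\notin A(Y,d)$, i.e. $Y$ is covered by uniformly bounded families $\mathcal{V}_i$ ($i\in\tau$) with each $\mathcal{V}_i$ being $i$-disjoint. Assigning $\mathcal{U}_j$ to the index $a_j$ (legitimate since $\mathcal{U}_j$ is $a_n$-disjoint and $a_j\le a_n$) and $\mathcal{V}_i$ to the index $i$ produces uniformly bounded, suitably disjoint families indexed by $\{a_0,\dots,a_n\}\cup\tau$ whose union covers $\bigcup\bigcup_{j=0}^{n}\mathcal{U}_j\cup Y=X$, contradicting $\{a_0,\dots,a_n\}\cup\tau\in A(X,d)$. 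With the inclusion in hand, Lemma \ref{lemordsubset} together with the inductive hypothesis applied to $Y$ (whose coasdim lies below $\lambda\le\gamma$) gives $\text{Ord}A(X,d)^{\{a_0,\dots,a_n\}}\le\text{Ord}A(Y,d)\le\text{coasdim}(Y)<\lambda$. Since the tuple was arbitrary, the reformulation yields $\text{Ord}A(X,d)\le\lambda+n=\gamma$, completing the induction.
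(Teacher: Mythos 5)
The paper does not prove this lemma itself --- it is imported from \cite{omega+k} with only a citation --- so there is no in-text proof to compare against. Your argument is correct and is the standard one for this fact: the transfinite induction on $\mathrm{coasdim}$, the reformulation $\mathrm{Ord}\,M\le\lambda+n\Leftrightarrow\mathrm{Ord}\,M^{\{a_0,\dots,a_n\}}<\lambda$ for all pairwise distinct $a_0,\dots,a_n$, and the key inclusion $A(X,d)^{\{a_0,\dots,a_n\}}\subseteq A(Y,d)$ for the complement $Y$ all check out; the only slip is cosmetic, since passing from $M$ to $M^{\{a_0,\dots,a_n\}}$ iterates the derivative $n+1$ times rather than $n$.
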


\begin{thm}\rm
coasdim$(X_{\xi})=\xi$ and trasdim$(X_{\xi})= \xi$ for every countable ordinal $\xi$.
\end{thm}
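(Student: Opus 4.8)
The plan is to obtain the two equalities as a single collapse of a chain of inequalities, using the three results already in hand: Theorem \ref{thmmain0}, Theorem \ref{thmmain1} and Lemma \ref{lemrelation}. All the substantive content has already been proved in the construction of $X_\xi$, so what remains here is purely formal.

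First I would record the two bounds coming from the construction. Theorem \ref{thmmain0} gives $\xi\le$ trasdim$(X_\xi)$, and Theorem \ref{thmmain1} gives coasdim$(X_\xi)\le\xi$. The second of these also tells us that coasdim$(X_\xi)$ is bounded by an ordinal, i.e.\ that $X_\xi$ has complementary-finite asymptotic dimension; this is exactly the hypothesis required to apply Lemma \ref{lemrelation}.

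Next I would invoke Lemma \ref{lemrelation} to bridge the two dimensions, obtaining trasdim$(X_\xi)\le$ coasdim$(X_\xi)$. Chaining all three facts together yields
\[
\xi\le\text{trasdim}(X_\xi)\le\text{coasdim}(X_\xi)\le\xi,
\]
so every inequality in the chain is forced to be an equality, and therefore trasdim$(X_\xi)=$ coasdim$(X_\xi)=\xi$.

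The only point requiring any attention—and the nearest thing to an obstacle—is checking that Lemma \ref{lemrelation} genuinely applies, namely that $X_\xi$ has complementary-finite asymptotic dimension (a finite ordinal value) rather than coasdim $=\infty$; but this is immediate from Theorem \ref{thmmain1}, which supplies the explicit bound $\xi$. No further computation is needed.
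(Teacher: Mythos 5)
Your proposal is correct and is essentially the same argument as the paper's: combine Theorem \ref{thmmain0}, Theorem \ref{thmmain1} and Lemma \ref{lemrelation} into the chain $\xi\le\mathrm{trasdim}(X_\xi)\le\mathrm{coasdim}(X_\xi)\le\xi$ and collapse it. Your explicit check that Theorem \ref{thmmain1} supplies the hypothesis of Lemma \ref{lemrelation} (coasdim$(X_\xi)\ne\infty$) is a point the paper leaves implicit, though note the bound $\xi$ is an ordinal, not necessarily a \emph{finite} one as your parenthetical suggests.
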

\begin{proof}
By Theorem \ref{thmmain1} and Lemma \ref{lemrelation}, trasdim$(X_{\xi})\leq \xi$.
Then trasdim$(X_{\xi})= \xi$ by Theorem \ref{thmmain0}.
By Lemma \ref{lemrelation}, coasdim$(X_{\xi})\geq$trasdim$(X_{\xi})= \xi$.
Then coasdim$(X_{\xi})=\xi$ by Theorem \ref{thmmain1}.

\end{proof}

\end{section}

\begin{section}{Acknowledgements}
The authors wish to thank the reviewers for careful reading and valuable comments. This work was
supported by NSFC grant of P.R. China (No. 12071183, 11871342).
And the authors (Yan Wu and Jingming Zhu) want to thank V.M. Manuilov and Benyin Fu for helpful discussions.
The third author (Taras Radul) is grateful to Taras Banakh for valuable and stimulating discussions.
\end{section}

\end{document}